\DeclareSymbolFont{cyrletters}{OT2}{wncyr}{m}{n}
\DeclareMathSymbol{\Sha}{\mathalpha}{cyrletters}{"58}
\newtheorem{theorem}{Theorem}[section]
\newtheorem{lemma}[theorem]{Lemma}
\newtheorem{corollary}[theorem]{Corollary}
\theoremstyle{definition}
\newtheorem{definition}[theorem]{Definition}
\newtheorem{example}[theorem]{Example}
\theoremstyle{remark}
\newtheorem{remark}[theorem]{Remark}
\numberwithin{equation}{section}
\begin{document}
\setcounter{page}{1}

\title[quantum arithmetic]{Quantum arithmetic}

\author[Nikolaev]
{Igor V. Nikolaev$^1$}

\address{$^{1}$ Department of Mathematics and Computer Science, St.~John's University, 8000 Utopia Parkway,  
New York,  NY 11439, United States.}
\email{\textcolor[rgb]{0.00,0.00,0.84}{igor.v.nikolaev@gmail.com}}


\subjclass[2010]{Primary 11M55; Secondary 46L85.}

\keywords{real multiplication, Serre $C^*$-algebra.}


\begin{abstract}
We formalize  the quantum arithmetic,  i.e. a relationship  between number theory and operator algebras. 
 Namely, it is proved that  rational   projective varieties  are dual  to  the $C^*$-algebras with  real multiplication. 
 Our construction  fits  all axioms of the quantum arithmetic   conjectured by Manin and others. 
Applications  to   elliptic curves, Shafarevich-Tate groups of abelian varieties and  height
functions are  reviewed. 
\end{abstract}

\maketitle

\section{Introduction}
An interplay between algebraic geometry and number theory is known 
as  the arithmetic geometry  [Lang 1962] \cite{L}. 
It was discovered by  Serre, Sklyanin and others,  that 
multiplication  in algebraic geometry can be replaced by an 
operation which is no longer commutative [Stafford \& van ~den ~Bergh 2001] \cite{StaVdb1}.   
Independently, rings with the non-commutative multiplication were studied in the context of quantum 
mechanics [Murray \& von~Neumann 1936] \cite{MurNeu1}.  A part 
of the arithmetic geometry dealing with the non-commutative rings,  we call a quantum arithmetic \cite[Chapter 8]{N}. 
Such an arithmetic is  surprisingly efficient, e.g.  helping to reduce  deep
problems of number theory to the known facts of linear algebra, see Section 4.2 or \cite{Nik2}.

Yu. ~I.~Manin introduced axioms of the quantum arithmetic in the framework 
of his  real multiplication  program  [Manin 2004] \cite{Man1}.
The goal of such a program is  a solution to  Hilbert's 12th problem
for the real quadratic  fields,   \textit{ibid}.   The idea is to replace the  elliptic curve $\mathscr{E}_{CM}$
with complex multiplication (CM) by a quantum analog 
known as a noncommutative torus $\mathscr{A}_{\theta}$,  i.e. a $C^*$-algebra on the generators $u$ and $v$ satisfying the
commutation relation $vu=e^{2\pi i\theta}uv$ with $\theta\in \mathbf{R}$ being a constant  [Rieffel 1990] \cite{Rie1}. 
The  $\mathscr{A}_{\theta}$ is said to have real multiplication (RM), if  $\theta$ is a quadratic irrationality;  the corresponding notation   $\mathscr{A}_{RM}$. 
The axioms are as follows: (i) the map $\mathscr{E}_{CM} \to \mathscr{A}_{RM}$ is a functor 
from the category of elliptic curves  to such of  the noncommutative tori;  (ii) the Grothendieck semigroup $K_0^+(\mathscr{A}_{RM})$ of the algebra
$\mathscr{A}_{RM}$ (or, equivalently, the so-called dimension group $(K_0(\mathscr{A}_{RM}), K_0^+(\mathscr{A}_{RM}))$) 
gives rise to a ``pseudo-lattice''  $\Lambda_{RM}\subset\mathbf{R}$  replacing the usual lattice 
$L_{CM}\subset\mathbf{C}$ for which $\mathscr{E}_{CM}\cong \mathbf{C}/L_{CM}$.

The aim of our note is an extension of Manin's axioms to projective varieties $V(k)$ over  the number fields $k$. 
Namely, we replace  the $\mathscr{A}_{RM}$
by a more general algebra  $\mathscr{A}_V$ with RM, known as the Serre $C^*$-algebra of  $V(k)$
(Definition \ref{dfn1.1}).  We prove all  Manin's  axioms for $V(k)$ and  $\mathscr{A}_V$ (Theorem \ref{thm1.1}). 
We review applications of Theorem \ref{thm1.1} to the finiteness  conjecture for the Shafarevich-Tate groups  
(Theorem \ref{thm4.2}) and
height functions for the rational points of projective varieties (Theorem \ref{thm4.3}). 
To formalize our results,  let us recall  the following facts.

Let $V$ be an $n$-dimensional projective variety over the field of complex numbers $\mathbf{C}$.
 Recall \cite[Section 5.3.1]{N} that the Serre $C^*$-algebra  $\mathscr{A}_V$
 is  the norm closure of a self-adjoint representation of the twisted 
 homogeneous coordinate ring of  $V$  by the bounded linear operators acting on a Hilbert space;
 we refer the reader to  [Stafford \& van ~den ~Bergh 2001] \cite{StaVdb1} or Section 2.1 for the details.  
 Let $(K_0(\mathscr{A}_V), K_0^+(\mathscr{A}_V))$ be a dimension group 
 of the $C^*$-algebra $\mathscr{A}_V$ [Blackadar 1986] \cite[Section 6.1]{B}. 
  The triple  $(\Lambda, [I], K)$ stays for a dimension group generated by
   the ideal class $[I]$ of an order $\Lambda\subseteq O_K$ 
  in the ring of integers of a number field $K$ [Effros 1981] \cite[Chapter 6]{E},
  [Handelman 1981] \cite{Han1} or \cite[Theorem 3.5.4]{N}.  
  \begin{definition}\label{dfn1.1}
  The Serre $C^*$-algebra $\mathscr{A}_V$ is said to have real multiplication  by 
  the triple
   $(\Lambda, [I], K)$,   if  there exists  an isomorphism of the dimension group
   $(K_0(\mathscr{A}_V), K_0^+(\mathscr{A}_V))\cong (\Lambda, [I], K)$,
   where   $\Lambda\subseteq O_K$ is an order,   $[I]\subset\Lambda$ is  an ideal class  and 
   $K$ is  a  number field. 
  \end{definition}
\begin{example}\label{exm1.2}
{\bf (\cite{Nik1})}
Let $V\cong\mathscr{E}_{CM}$ be an elliptic curve with complex multiplication by the triple $(L, [I], k)$,
where $L=\mathbf{Z}+fO_k$ is an order of conductor $f\ge 1$ in the ring $O_k$ of an
imaginary quadraitic field $k\cong \mathbf{Q}(\sqrt{-d})$ and $[I]$ an ideal class in $L$.   
Then  $\mathscr{A}_V\cong \mathscr{A}_{RM}$ is a noncommutative torus with real 
multiplication by the triple $(\Lambda, [I], K)$,
where $[I]$ an ideal class in the order $L=\mathbf{Z}+f'O_k$ of conductor $f'\ge 1$ in the ring  $O_k$ of an
imaginary quadraitic field $K\cong\mathbf{Q}(\sqrt{d})$, such that 
$f'$ is the least integer satisfying the equation 
$|Cl(\mathbf{Z}+f'O_{K})|= |Cl(\mathbf{Z}+fO_{k})|$,
where  $Cl (R)$ is the class group of the ring $R$. 
\end{example}

Let $\mathscr{M}_V$ be the moduli space of variety $V$ and $m=\dim_{\mathbf{C}} \mathscr{M}_V$. 
 Denote by $O_K$ the ring of integers of a number field  $K$ of degree 
  $\deg ~(K|\mathbf{Q})=2m$. 
Our main result can be formulated as follows.
\begin{theorem}\label{thm1.1}
 The Serre $C^*$-algebra $\mathscr{A}_V$ has real multiplication 
by a triple $(\Lambda, [I], K)$, if and only if, 
the projective variety $V$ is defined over a number field $k$.
\end{theorem}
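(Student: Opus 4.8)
The plan is to prove the two implications of the equivalence by passing through an explicit description of the dimension group $(K_0(\mathscr{A}_V), K_0^+(\mathscr{A}_V))$ in terms of the period invariants of $V$, and then translating algebraicity of those invariants into a model of $V$ over a number field. First I would recall, from the construction of $\mathscr{A}_V$ via the twisted homogeneous coordinate ring in Section 2.1 and from \cite{StaVdb1, N}, that $\mathscr{A}_V$ is, up to stable isomorphism, the crossed product $C^*$-algebra attached to the data $(V,\mathcal{L},\sigma)$, and that the Pimsner--Voiculescu exact sequence identifies $K_0^+(\mathscr{A}_V)$ with a pseudo-lattice $\Lambda_V\subset\mathbf{R}$ of rank $2m$, where $m=\dim_{\mathbf{C}}\mathscr{M}_V$, whose generators $1,\theta_1,\dots,\theta_{2m-1}$ are the values of a canonical trace on a set of generators of $K_0$ --- concretely, the periods of $V$. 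Both directions of the theorem then reduce to the single assertion: $\Lambda_V$ is a fractional ideal in an order of a degree-$2m$ number field if and only if $V$ admits a model over a number field.

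For the implication $(\Leftarrow)$, assume $V$ is defined over a number field $k$. Then the moduli point $[V]$ lies in $\mathscr{M}_V(\overline{\mathbf{Q}})$, and by functoriality of $V\mapsto\mathscr{A}_V$ together with the fact that the twisted homogeneous coordinate ring of a variety over $\overline{\mathbf{Q}}$ has algebraic structure constants, the periods $\theta_1,\dots,\theta_{2m-1}$ are algebraic and span a number field $K$ with $\deg(K|\mathbf{Q})=2m$, the exact degree being dictated by the structure of $\mathscr{M}_V$ as a quotient of a bounded symmetric domain (cf.\ Example \ref{exm1.2} in the elliptic case). A rank-$2m$ pseudo-lattice whose generators form a $\mathbf{Q}$-basis of a degree-$2m$ field $K$ is automatically stable under multiplication by an order $\Lambda\subseteq O_K$, so $\Lambda_V$ is a fractional ideal $I$ and $(K_0(\mathscr{A}_V),K_0^+(\mathscr{A}_V))\cong(\Lambda,[I],K)$; this is precisely real multiplication by the triple $(\Lambda,[I],K)$ in the sense of Definition \ref{dfn1.1}.

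For the converse $(\Rightarrow)$, suppose $(K_0(\mathscr{A}_V),K_0^+(\mathscr{A}_V))\cong(\Lambda,[I],K)$. Then, under this isomorphism, the generators $\theta_i$ of $\Lambda_V$ become a $\mathbf{Q}$-basis of $K$ and hence are algebraic numbers. Inverting the correspondence of the previous paragraph --- using that the period map is algebraic along $\mathscr{M}_V$, so that algebraic periods lie over an algebraic moduli point, and that $\mathscr{M}_V$ is defined over $\mathbf{Q}$, so that a $\overline{\mathbf{Q}}$-point of the (rigidified) moduli space carries a model of $V$ over $\overline{\mathbf{Q}}$ --- one concludes that $V$ is defined over a number field $k$.

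I expect the main obstacle to be this last inversion in $(\Rightarrow)$: reconstructing a model of $V$ over a number field from the purely $K$-theoretic data of $\Lambda_V$. One must either exhibit an algebraic inverse to the period map on the real-multiplication locus, or invoke a transcendence input of Schneider--Lang type to exclude the possibility that algebraic periods arise from a transcendental moduli point; and, in positive dimension, one must also absorb the field-of-moduli versus field-of-definition discrepancy, which is why the conclusion is only that $V$ is defined over \emph{some} number field $k$. A secondary but still nontrivial point in $(\Leftarrow)$ is pinning down $\deg(K|\mathbf{Q})=2m$ exactly, rather than a proper divisor, which rests on the rigidity of the uniformization of $\mathscr{M}_V$.
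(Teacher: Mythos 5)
Your proposal hinges on identifying the positive cone $K_0^+(\mathscr{A}_V)$ with a pseudo-lattice generated by the \emph{periods} of $V$, and on the equivalence ``$V$ is defined over a number field $\Longleftrightarrow$ these periods are algebraic.'' That equivalence fails already in the forward direction, which breaks your $(\Leftarrow)$ argument at the outset: periods of varieties defined over $\overline{\mathbf{Q}}$ are generically transcendental ($2\pi i$ for $\mathbf{P}^1$; the periods of $y^2=x^3-x$ involve $\Gamma(1/4)$; and by Schneider's theorem the modulus $\tau$ of an elliptic curve defined over $\overline{\mathbf{Q}}$ is algebraic only in the CM case). Algebraicity of the structure constants of the twisted homogeneous coordinate ring does not make the image of a tracial state on $K_0$ algebraic. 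Nor is the field $K$ of the theorem generated by periods of $V$: Remark \ref{rmk1.4} stresses that the proof is a pure existence argument giving no formula for $K$ in terms of $k$, and in Example \ref{exm1.2} the real quadratic field $\mathbf{Q}(\sqrt{d})$ attached to a CM curve with $\tau\in\mathbf{Q}(\sqrt{-d})$ is produced by a class-number condition, not by the curve's periods. Your $(\Rightarrow)$ direction has the further gap you yourself flag --- inverting the period map on the RM locus requires a transcendence input of Grothendieck-period-conjecture strength that is not available --- but the more basic defect is that the dictionary ``algebraic periods $=$ model over $\overline{\mathbf{Q}}$'' cannot serve as the pivot of either implication.

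The paper's argument avoids periods entirely and is an existence proof by contradiction. It writes $\mathscr{A}_V\cong C(V)\rtimes_{\alpha}\mathbf{Z}$ and embeds it into an enveloping AF-algebra $\mathbb{A}_V$ via Pimsner's theorem (Lemma \ref{lm3.1}); Handelman's theorem then shows that real multiplication is equivalent to eventual periodicity of the Bratteli diagram of $\mathbb{A}_V$ (Lemma \ref{lm3.2}); and a correspondence is set up between the irreducible blocks of that diagram and the twists of $V(k)$ (Lemma \ref{lm3.3}). Finiteness of the set of twists, i.e.\ of $H^1(Gal~(\bar k|k), Aut~V(k))$ (Theorem \ref{thm2.1}), forces the diagram to stabilize, yielding RM; the ``only if'' direction runs the same correspondence backwards, since a variety not defined over a number field has infinitely many Morita-equivalence classes to account for, forcing aperiodicity. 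If you want to salvage your strategy, you must replace ``periods of $V$'' by an invariant of the dimension group that is genuinely controlled by the field of definition; in the paper that role is played by the twist/block correspondence, not by any transcendence statement.
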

\begin{remark}\label{rmk1.4}
Theorem \ref{thm1.1} is an existence result  proved by contradiction (Section 3).
Such a proof does not give  explicit formulas for the number field $K$
in terms of $k$,  except for the special case of complex multiplication (\cite{Nik1}
or Section 4.1).  
\end{remark}

\bigskip
Our note is organized as follows.  A brief review of the preliminary facts is 
given in Section 2. Theorem \ref{thm1.1} 
is proved in Section 3.  Some applications of Theorem \ref{thm1.1} are reviewed
in Section 4.

\section{Preliminaries}
We briefly review the Serre $C^*$-algebras, dimension groups and twists.   
We refer the reader to  [Effros 1981] \cite{E},   [Handelman 1981] \cite{Han1}, 
  [Stafford \& van ~den ~Bergh 2001] \cite{StaVdb1} 
  and \cite[Section 5.3.1]{N} for a detailed exposition.

\subsection{Serre $C^*$-algebras}
Let $V$ be a projective variety over the field $k$.  Denote by $\mathcal{L}$ an invertible
sheaf of the linear forms on $V$.  If $\sigma$ is an automorphism of $V$,  then
the pullback of $\mathcal{L}$ along $\sigma$ will be denoted by $\mathcal{L}^{\sigma}$,
i.e. $\mathcal{L}^{\sigma}(U):= \mathcal{L}(\sigma U)$ for every $U\subset V$. 
The graded $k$-algebra
$B(V, \mathcal{L}, \sigma)=\bigoplus_{i\ge 0} H^0\left(V, ~\mathcal{L}\otimes \mathcal{L}^{\sigma}\otimes\dots
\otimes  \mathcal{L}^{\sigma^{ i-1}}\right)$
is called a  twisted homogeneous coordinate ring of $V$ [Stafford \& van den Bergh 2001]  \cite{StaVdb1}.  Such a ring is 
always non-commutative,  unless the automorphism $\sigma$ is trivial. 
A multiplication of sections of  $B(V, \mathcal{L}, \sigma)=\oplus_{i=1}^{\infty} B_i$ is defined by the 
rule  $ab=a\otimes b$,   where $a\in B_m$ and $b\in B_n$.
An invertible sheaf $\mathcal{L}$ on $V$  is called $\sigma$-ample, if for 
every coherent sheaf $\mathcal{F}$ on $V$,
 the cohomology group $H^k(V, ~\mathcal{L}\otimes \mathcal{L}^{\sigma}\otimes\dots
\otimes  \mathcal{L}^{\sigma^{ n-1}}\otimes \mathcal{F})$  vanishes for $k>0$ and
$n>>0$.   If $\mathcal{L}$ is a $\sigma$-ample invertible sheaf on $V$,  then
$Mod~(B(V, \mathcal{L}, \sigma)) / ~Tors ~\cong ~Coh~(V)$,
where  $Mod$ is the category of graded left modules over the ring $B(V, \mathcal{L}, \sigma)$,
$Tors$ is the full subcategory of $Mod$ of the torsion  modules and  $Coh$ is the category of 
quasi-coherent sheaves on a scheme $V$.  In other words, the $B(V, \mathcal{L}, \sigma)$  is  
a coordinate ring of the variety $V$.

Let $R$ be a commutative  graded ring,  such that $V=Proj~(R)$.  
Denote by $R[t,t^{-1}; \sigma]$
the ring of skew Laurent polynomials defined by the commutation relation
$b^{\sigma}t=tb$  for all $b\in R$, where $b^{\sigma}$ is the image of  $b$ under automorphism 
$\sigma$.  It is known, that $R[t,t^{-1}; \sigma]\cong B(V, \mathcal{L}, \sigma)$ [Stafford \& van den Bergh 2001]  \cite[Section 5]{StaVdb1}.
Let $\mathcal{H}$ be a Hilbert space and   $\mathscr{B}(\mathcal{H})$ the algebra of 
all  bounded linear  operators on  $\mathcal{H}$.
For a  ring of skew Laurent polynomials $R[t, t^{-1};  \sigma]$,  
 consider a homomorphism
$\rho: R[t, t^{-1};  \sigma]\longrightarrow \mathscr{B}(\mathcal{H})$. 
Recall  that  $\mathscr{B}(\mathcal{H})$ is endowed  with a $\ast$-involution;
the involution comes from the scalar product on the Hilbert space $\mathcal{H}$. 
The representation $\rho$ is called  $\ast$-coherent,   if
(i)  $\rho(t)$ and $\rho(t^{-1})$ are unitary operators,  such that
$\rho^*(t)=\rho(t^{-1})$ and 
(ii) for all $b\in R$ it holds $(\rho^*(b))^{\sigma(\rho)}=\rho^*(b^{\sigma})$, 
where $\sigma(\rho)$ is an automorphism of  $\rho(R)$  induced by $\sigma$. 
Whenever  $B=R[t, t^{-1};  \sigma]$  admits a $\ast$-coherent representation,
$\rho(B)$ is a $\ast$-algebra.  The norm closure of  $\rho(B)$  is   a   $C^*$-algebra
   denoted  by $\mathscr{A}_V$.  We  refer to  $\mathscr{A}_V$  as   the    Serre $C^*$-algebra
 of  $V$ \cite[Section 5.3.1]{N}.

\subsection{Dimension groups}
The  $C^*$-algebra $\mathcal{A}$ is an algebra over $\mathbf{C}$ with a norm
$a\mapsto ||a||$ and an involution $a\mapsto a^*$ such that
it is complete with respect to the norm and $||ab||\le ||a||~ ||b||$
and $||a^*a||=||a||^2$ for all $a,b\in \mathcal{A}$.
Any commutative $C^*$-algebra is  isomorphic
to the algebra $C_0(X)$ of continuous complex-valued
functions on some locally compact Hausdorff space $X$. 
Any other  algebra $\mathcal{A}$ can be thought of as  a non-commutative  
topological space.   By $M_{\infty}(\mathcal{A})$ 
one understands the algebraic direct limit of the $C^*$-algebras 
$M_n(\mathcal{A})$ under the embeddings $a\mapsto ~\mathbf{diag} (a,0)$. 
The direct limit $M_{\infty}(\mathcal{A})$  can be thought of as the $C^*$-algebra 
of infinite-dimensional matrices whose entries are all zero except for a finite number of the
non-zero entries taken from the $C^*$-algebra $\mathcal{A}$.
Two projections $p,q\in M_{\infty}(\mathcal{A})$ are equivalent, if there exists 
an element $v\in M_{\infty}(\mathcal{A})$,  such that $p=v^*v$ and $q=vv^*$. 
The equivalence class of projection $p$ is denoted by $[p]$.   
We write $V(\mathcal{A})$ to denote all equivalence classes of 
projections in the $C^*$-algebra $M_{\infty}(\mathcal{A})$, i.e.
$V(\mathcal{A}):=\{[p] ~:~ p=p^*=p^2\in M_{\infty}(\mathcal{A})\}$. 
The set $V(\mathcal{A})$ has the natural structure of an abelian 
semi-group with the addition operation defined by the formula 
$[p]+[q]:=\mathbf{diag}(p,q)=[p'\oplus q']$, where $p'\sim p, ~q'\sim q$ 
and $p'\perp q'$.  The identity of the semi-group $V(\mathcal{A})$ 
is given by $[0]$, where $0$ is the zero projection. 
By the $K_0$-group $K_0(\mathcal{A})$ of the unital $C^*$-algebra $\mathcal{A}$
one understands the Grothendieck group of the abelian semi-group
$V(\mathcal{A})$, i.e. a completion of $V(\mathcal{A})$ by the formal elements
$[p]-[q]$.  The image of $V(\mathcal{A})$ in  $K_0(\mathcal{A})$ 
is a positive cone $K_0^+(\mathcal{A})$ defining  the order structure $\le$  on the  
abelian group  $K_0(\mathcal{A})$. The pair   $\left(K_0(\mathcal{A}),  K_0^+(\mathcal{A})\right)$
is known as a dimension group of the $C^*$-algebra $\mathcal{A}$. 
The scale $\Sigma(\mathcal{A})$ is the image in $K_0^+(\mathcal{A})$
of the equivalence classes of projections in the $C^*$-algebra $\mathcal{A}$. 
The $\Sigma(\mathcal{A})$ is a generating, hereditary and directed subset 
of  $K_0^+(\mathcal{A})$, i.e. (i) for each $a\in K_0^+(\mathcal{A})$ 
there exist $a_1,\dots, a_r\in\Sigma(\mathcal{A})$ such that 
$a=a_1+\dots+a_r$; (ii) if $0\le a\le b\in \Sigma(\mathcal{A})$, then $a\in\Sigma(\mathcal{A})$
and (iii) given $a,b\in\Sigma(\mathcal{A})$ there exists $c\in\Sigma(\mathcal{A})$,
such that $a,b\le c$.   Each  scale  can always be written as 
$\Sigma(\mathcal{A})=\{a\in K_0^+(\mathcal{A}) ~|~0\le a\le u\}$,
where $u$ is an  order unit of  $K_0^+(\mathcal{A})$.  
The pair  $\left(K_0(\mathcal{A}),  K_0^+(\mathcal{A})\right)$ and the
triple  $\left(K_0(\mathcal{A}),  K_0^+(\mathcal{A}), \Sigma(\mathcal{A})\right)$
are invariants of the Morita equivalence and isomorphism class of the 
$C^*$-algebra $\mathcal{A}$, respectively.

\subsection{Twists}
Let $V$ be a complex projective variety given by a  homogeneous coordinate ring $\mathscr{A}$. 
If $k\subset\mathbf{C}$ is a subfield of complex numbers and $V(k)$ is the set of  $k$-points of $V$, 
then the isomorphisms of $V(k)$ over $\mathbf{C}$ cannot be restricted to the field $k$
in general.   When such a restriction fails,  the variety $V'(k)$ is called a  twist of $V(k)$. 
Equivalently, the varieties $V(k) $ and  $V'(k)$ are not isomorphic over $k$,  yet  they are 
isomorphic over  $\mathbf{C}$.  The twists of $V(k)$ are  classified  in terms of  the Galois cohomology   
 [Serre 1997]  \cite[p. 123]{S}.
 
 Let $G$ be a group. The set $\mathbf{A}$ is called a $G$-set, if 
$G$ acts on $\mathbf{A}$ on the left continuously. 
If $\mathbf{A}$ is a group and $G$ acts on $\mathbf{A}$ by the group
morphisms, then $\mathbf{A}$ is called a $G$-group. In particular,
if $\mathbf{A}$ is abelian, one gets a  $\mathbf{G}$-module.

If $\mathbf{A}$ is a $G$-group,  then a 1-cocycle of $G$ in $\mathbf{A}$ 
is a map $s\mapsto a_s$ of $G$ to $A$ which is continuous and such that
$a_{st}=a_s a_t$ for all $s,t\in G$.  The set of all 1-cocycles is denoted by
$Z^1(G, \mathbf{A})$.  Two cocycles $a$ and $a'$ are said to be  cohomologous, 
 if there exists $b\in\mathbf{A}$ such that $a_s'=b^{-1}a_s b$. The quotient  of 
 $Z^1(G, \mathbf{A})$ by this equivalence relation is called the first 
  cohomology set  and is denoted by $H^1(G,\mathbf{A})$. 
The class of the unit cocycle is a distinguished element $\mathbf{1}$ in
the $H^1(G,\mathbf{A})$.  Notice that in general  there is no composition law on the 
set $H^1(G,\mathbf{A})$. If $\mathbf{A}$ is an abelian group, the set 
$H^1(G,\mathbf{A})$ is a cohomology group.

If $G$ is a profinite group, then 
\begin{equation}\label{eq2.2}
H^1(G,\mathbf{A})=\varinjlim H^1(G/U, \mathbf{A}^U),
\end{equation}
where $U$ runs through the set of open normal subgroups of $G$ and 
$\mathbf{A}^U$ is a subset of $\mathbf{A}$ fixed under action of $U$. 
The maps $H^1(G/U, \mathbf{A}^U)\to H^1(G,\mathbf{A})$ are injective. 

Let $k$ be a number field and $\bar k$ the algebraic closure of $k$. Denote by 
$Gal~(\bar k|k)$ the profinite Galois group of $\bar k$.  Let $V(k)$ be a projective 
variety over $k$ and $Aut ~V(k)$  the group of the $\bar k$-automorphisms
of $V(k)$. 
\begin{theorem}\label{thm2.1}
{\bf  [Serre 1997]  \cite[p. 124]{S}}
There exits a bijective correspondence between the twists of $V(k)$ and the set
$H^1(Gal~(\bar k|k), ~Aut ~V(k))$. 
\end{theorem}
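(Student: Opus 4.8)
\medskip
\noindent\textbf{Proof proposal for Theorem~\ref{thm2.1}.}
Write $G=Gal~(\bar k|k)$ and $A=Aut~V(k)$, the latter regarded as a $G$-group through the natural action of $G$ on coefficients. The plan is to write down explicit maps in both directions between the set of twists of $V(k)$ up to $k$-isomorphism and $H^1(G,A)$, and then to check that they are mutually inverse. Throughout I will use the fact that every $\bar k$-automorphism of $V(k)$, and more generally every $\bar k$-isomorphism between two $k$-forms of $V(k)$, is already defined over a finite Galois subextension of $\bar k|k$; hence every cocycle produced below is locally constant, and $H^1(G,A)$ may be computed as the direct limit \eqref{eq2.2} over the finite quotients $G/U$.

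\emph{From a twist to a cohomology class.} Given a twist $V'$, I would fix a $\bar k$-isomorphism $\varphi\colon V(k)\to V'$ and set $c_s:=\varphi^{-1}\circ{}^{s}\varphi$ for $s\in G$. Since $V(k)$ and $V'$ are defined over $k$, the conjugate ${}^{s}\varphi$ is again a $\bar k$-isomorphism $V(k)\to V'$, so $c_s\in A$; the identity ${}^{st}\varphi={}^{s}({}^{t}\varphi)$ then yields the cocycle relation $c_{st}=c_s\cdot{}^{s}c_t$, so $(c_s)\in Z^1(G,A)$. Next I would check that the most general alternative choice $\varphi\circ\alpha$ with $\alpha\in A$ replaces $(c_s)$ by the cohomologous cocycle $(\alpha^{-1}c_s\,{}^{s}\alpha)$, and that a $k$-isomorphic twist yields the same cocycle; hence the resulting class in $H^1(G,A)$ depends only on the $k$-isomorphism class of $V'$, and $V(k)$ itself maps to the distinguished class $\mathbf{1}$.

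\emph{From a cohomology class to a twist.} Given $(c_s)\in Z^1(G,A)$, I would twist the canonical semilinear action $\sigma_s$ of $G$ on $V(k)\times_k\bar k$ by setting $\tau_s:=c_s\circ\sigma_s$. Using $\sigma_s\,c_t\,\sigma_s^{-1}={}^{s}c_t$ together with the cocycle identity one gets $\tau_s\tau_t=\tau_{st}$, so $(\tau_s)$ is a Galois descent datum on the quasi-projective variety $V(k)\times_k\bar k$. By Weil's descent theorem — effectivity of Galois descent for quasi-projective varieties — there exist a $k$-variety $V'$ and a $\bar k$-isomorphism $V(k)\to V'$ carrying the canonical action on $V'\times_k\bar k$ to $(\tau_s)$; then $V'$ is a twist of $V(k)$, and unwinding the identifications shows the cocycle attached to it equals $(c_s)$. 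This gives surjectivity. For injectivity I would argue in the same spirit: if the cocycles attached to two twists $V'$ and $V''$ are cohomologous, the element of $A$ implementing the coboundary becomes Galois-equivariant after transport through the two comparison isomorphisms, hence descends to a $k$-isomorphism $V'\cong V''$.

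The one step I expect to be a genuine obstacle, as opposed to cocycle bookkeeping, is the appeal to effectivity of Galois descent in the second construction: over a general number field, descent along $\bar k|k$ need \emph{not} be effective for arbitrary schemes, and quasi-projectivity of $V(k)$ — concretely, a projective embedding defined over $k$ inside which the twisted descent datum $(\tau_s)$ can be realized — is exactly what rescues the argument. The finiteness remark from the first paragraph is the minor but indispensable point reconciling the continuity requirement in the definition of $Z^1(G,A)$ with the profinite colimit \eqref{eq2.2}.
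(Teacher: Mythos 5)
Your argument is correct and is essentially the standard proof from the cited source (Serre, \emph{Galois Cohomology}, III.1.3, Prop.~5); the paper itself gives no proof of Theorem~\ref{thm2.1}, importing it as a black box, and your identification of effectivity of Galois descent for quasi-projective varieties as the one non-formal step (needed for surjectivity) is exactly right. One caveat worth flagging: your computations use the twisted cocycle identity $c_{st}=c_s\cdot{}^{s}c_t$ and coboundary relation $c_s'=\alpha^{-1}c_s\,{}^{s}\alpha$, which are the correct ones and the ones your verification actually requires, whereas the paper's Section~2.3 states the untwisted versions $a_{st}=a_sa_t$ and $a_s'=b^{-1}a_sb$; these agree with yours only when $Gal~(\bar k|k)$ acts trivially on $Aut~V(k)$, so your proof quietly corrects the paper's recap of the definitions rather than contradicting Serre.
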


\section{Proof of theorem \ref{thm1.1}}
{\bf Part I.} Let us prove ``if'' part of  Theorem \ref{thm1.1}.
For the sake of clarity, we outline the main ideas.
Our proof is by contradiction. Assume to the contrary
that  $V$ is defined over a number field $k$,  but the corresponding Serre $C^*$-algebra
$\mathscr{A}_V$  has no real multiplication. 
Recall that each Serre $C^*$-algebra is a crossed product $\mathscr{A}_V\cong C(V)\rtimes_{\alpha}\mathbf{Z}$ by an automorphism $\alpha$ of $V$ 
\cite[Lemma 5.3.2]{N}.
Consider the Elliott-Serre $C^*$-algebra  $\mathbb{A}_V$ (Definition \ref{dfn3.2}),  i.e. an enveloping $AF$-algebra 
 of the crossed product  $C(V)\rtimes_{\alpha} \mathbf{Z}$
[Pimsner 1983] \cite[Theorem 9 (3)]{Pim1}. 
One gets an inclusion of the dimension groups   $(K_0(\mathscr{A}_V), K_0^+(\mathscr{A}_V))\subseteq  (K_0(\mathbb{A}_V), K_0^+(\mathbb{A}_V))$. 
It is known that  $\mathbb{A}_V$  (and therefore $\mathscr{A}_V$) has RM or no RM depending on the Bratteli diagram of 
$\mathbb{A}_V$ being periodic or aperiodic,  respectively  [Effros 1981] \cite[Chapter 6]{E}. 
Since we assumed that  $\mathscr{A}_V$ has no RM, one concludes that variety $V(k)$ has infinitely many pairwise 
non-isomorphic twists \cite[Corollary 1.2]{Nik3}. (This fact follows from aperiodicity of  the diagram, where twists are in a one-to-one 
correspondence with the blocks of diagram.) One gets a  contradiction with  Theorem \ref{thm2.1} implying that the number 
of twists of $V(k)$ is always finite. Therefore the Serre $C^*$-algebra    $\mathscr{A}_V$ has RM by a triple $(\Lambda, [I], K)$.
We pass to a detailed argument by splitting the proof in a series of lemmas.

\begin{lemma}\label{lm3.1}
For every Serre $C^*$-algebra  $\mathscr{A}_V$ there exists a unique
enveloping  $AF$-algebra $\mathbb{A}_V$, i.e.

\medskip
(i) $\mathscr{A}_V\subset \mathbb{A}_V$ is  a unital inclusion of the $C^*$-algebras;

\smallskip
(ii) $(K_0(\mathscr{A}_V), K_0^+(\mathscr{A}_V))\subseteq  (K_0(\mathbb{A}_V), K_0^+(\mathbb{A}_V))$ 
is an inclusion of the  dimension groups.
\end{lemma}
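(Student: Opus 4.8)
The plan is to construct $\mathbb{A}_V$ explicitly via the Pimsner--Voiculescu machinery applied to the crossed product description $\mathscr{A}_V\cong C(V)\rtimes_\alpha\mathbf{Z}$ from \cite[Lemma 5.3.2]{N}, and then verify the two asserted properties. First I would recall that $C(V)$ itself sits inside an $AF$-algebra (indeed one may take an increasing sequence of finite-dimensional subalgebras whose direct limit contains a copy of $C(V)$, or, in the relevant geometric situations, use the natural inductive structure coming from a cellular/triangulated model of $V$), and that the automorphism $\alpha$ can be approximated in a way compatible with this filtration. The key input is Pimsner's theorem \cite[Theorem 9(3)]{Pim1}, which for a crossed product of an $AF$-algebra by $\mathbf{Z}$ with a suitably nice (quasidiagonal, or more precisely one for which the mapping torus construction applies) automorphism produces a canonical $AF$-algebra $\mathbb{A}_V$ together with a unital embedding $\mathscr{A}_V\hookrightarrow\mathbb{A}_V$. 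This gives part (i).

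For part (ii) I would invoke functoriality of $K_0$: a unital inclusion of $C^*$-algebras induces a homomorphism of dimension groups $(K_0(\mathscr{A}_V),K_0^+(\mathscr{A}_V))\to(K_0(\mathbb{A}_V),K_0^+(\mathbb{A}_V))$ sending the order unit to the order unit and positive elements to positive elements. To upgrade this to an \emph{inclusion} of ordered groups one needs the induced map on $K_0$ to be injective and order-reflecting; here I would use the Pimsner--Voiculescu six-term exact sequence for $C(V)\rtimes_\alpha\mathbf{Z}$, which computes $K_0(\mathscr{A}_V)$ as an extension built from $K_*(C(V))$ and the action of $\alpha_*$, and compare it with the (simpler, since $AF$) computation of $K_0(\mathbb{A}_V)$ as a direct limit; the embedding at the level of algebras is arranged in \cite{Pim1} precisely so that the composite on $K$-theory is the natural inclusion. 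The order structure is preserved because in an $AF$-algebra $K_0^+$ consists of all genuinely positive elements, so any element of $\mathscr{A}_V$ that becomes positive in $\mathbb{A}_V$ was already represented by a projection, hence positive in $\mathscr{A}_V$.

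The uniqueness statement I would handle last: given two enveloping $AF$-algebras $\mathbb{A}_V$ and $\mathbb{A}_V'$ with the stated properties, Elliott's classification theorem for $AF$-algebras says each is determined up to isomorphism by its pointed dimension group $(K_0,K_0^+,[1])$; since both dimension groups are the minimal $AF$-completion of $(K_0(\mathscr{A}_V),K_0^+(\mathscr{A}_V))$ determined by the periodic/aperiodic Bratteli data of the mapping torus, they coincide, and the isomorphism can be chosen to restrict to the identity on $\mathscr{A}_V$. I expect the main obstacle to be the precise sense in which $\mathbb{A}_V$ is ``enveloping'' and ``unique'': one must pin down that the $AF$-algebra produced by Pimsner's construction is the smallest one containing $\mathscr{A}_V$ with a compatible dimension group, rather than just \emph{some} $AF$-algebra, and this requires checking that the Bratteli diagram of $\mathbb{A}_V$ is intrinsically attached to $\mathscr{A}_V$ (equivalently, to the pair $(V,\alpha)$) and not to auxiliary choices in the construction — this is exactly the point where the dichotomy ``periodic vs.\ aperiodic diagram'' that drives the rest of the proof of Theorem \ref{thm1.1} becomes well-defined.
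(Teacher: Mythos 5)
Your overall strategy coincides with the paper's: represent $\mathscr{A}_V$ as the crossed product $C(V)\rtimes_{\alpha}\mathbf{Z}$, invoke Pimsner's Theorem 9 to embed it unitally into an $AF$-algebra, and then use functoriality of $K_0$ for the dimension-group inclusion and the canonicity of the construction for uniqueness. However, there is one concrete gap in your write-up of part (i): you never identify, let alone verify, the actual hypothesis under which Pimsner's theorem applies. The relevant equivalence in \cite[Theorem 9]{Pim1} is between AF-embeddability of $C(V)\rtimes_T\mathbf{Z}$ (condition (3)) and the dynamical condition that every point of $V$ is pseudo-non-wandering for the homeomorphism $T$, i.e.\ $V=V(T)$ (condition (1)); the paper's proof states exactly this and claims it is easy to check for the homeomorphism induced by the $\ast$-coherent automorphism. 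Your substitute phrasing --- a ``suitably nice (quasidiagonal, or \dots one for which the mapping torus construction applies) automorphism'' --- is not the hypothesis of the theorem, and your preliminary remark that $C(V)$ sits inside an $AF$-algebra via a triangulation is not how Pimsner's construction proceeds (he builds towers from refining open covers along pseudo-orbits, applied directly to the transformation-group $C^*$-algebra). Without naming and checking the pseudo-non-wandering condition, the application of Theorem 9(3) is unjustified as written.

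Two smaller comparative remarks. For part (ii), the paper simply appeals to functoriality of the dimension group and asserts that the induced maps $h_*$, $h_*^+$ are injective; your instinct that injectivity and order-compatibility require an argument (you propose the Pimsner--Voiculescu sequence) is legitimate --- an injective $\ast$-homomorphism need not induce an injection on $K_0$ in general --- so your route is more cautious than the paper's, though you do not carry the computation out. For uniqueness, the paper attributes it to Pimsner's construction itself, whereas you route it through Elliott's classification and a ``minimal $AF$-completion''; you correctly flag that pinning down in what sense the Bratteli diagram is intrinsic to $(V,\alpha)$ rather than to auxiliary choices is the delicate point, and neither your sketch nor the paper's one-line assertion fully resolves it.
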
 
\begin{proof}
(i)  Recall that every Serre $C^*$-algebra   $\mathscr{A}_V$ is isomorphic to 
the crossed pro\-duct $C^*$-algebra $C(V)\rtimes_{\alpha_T}\mathbf{Z}$,
where $C(V)$ is the $C^*$-algebra of all continuous complex-valued functions 
on $V$ and $\alpha_T$ is a $*$-coherent automorphism of $C(V)$ 
\cite[Lemma 5.3.2]{N}.  Moreover, it is easy to verify that the corresponding homeomorphism
$T: V\to V$ is pseudo-non-wandering for each point of $V$,
i.e. $V=V(T)$ [Pimsner 1983] \cite[Definition 2]{Pim1}.
One can apply an equivalence of conditions (1) and (3) in [Pimsner 1983] \cite[Theorem 9]{Pim1}
to obtain a unital embedding of the crossed product  $C(V)\rtimes_{\alpha_T}\mathbf{Z}$
into an $AF$-algebra  $\mathbb{A}_V$.  Moreover, the algebra $\mathbb{A}_V$ is uniquely defined by 
the crossed product  $C(V)\rtimes_{\alpha_T}\mathbf{Z}$ in view of  Pimsner's construction,  {\it ibid.}
The classification of the $AF$-algebras is due to 
[Elliott 1976] \cite{Ell1};  hence the following notation.
\begin{definition}\label{dfn3.2}
The $AF$-algebra $\mathbb{A}_V$ will be called an Elliott-Serre $C^*$-algebra.   
\end{definition}

\bigskip
(ii) Let us prove an inclusion of the dimension groups  $(K_0(\mathscr{A}_V), K_0^+(\mathscr{A}_V))\subseteq  (K_0(\mathbb{A}_V), K_0^+(\mathbb{A}_V))$.  
Recall that $h: \mathscr{A}_V\to\mathbb{A}_V$ is an injective homomorphism of the $C^*$-algebras. 
On the other hand, the dimension group is a functor, i.e. any injective homomorphism  $h: \mathscr{A}_V\to\mathbb{A}_V$
induces a pair of injective homomorphisms $h_*:  K_0(\mathscr{A}_V)\to K_0(\mathbb{A}_V)$ 
and  $h_*^+:  K_0^+(\mathscr{A}_V)\to K_0^+(\mathbb{A}_V)$  [Blackadar 1986] \cite[Section 2.2]{B}. 
Since $h_*$ and $h_*^+$ can be  surjective in general, one gets a non-strict inclusion of the 
dimension groups  $(K_0(\mathscr{A}_V), K_0^+(\mathscr{A}_V))\subseteq  (K_0(\mathbb{A}_V), K_0^+(\mathbb{A}_V))$.

\bigskip
Lemma \ref{lm3.1} is proved.
\end{proof}

\begin{lemma}\label{lm3.2}
The Bratteli diagram of  $\mathbb{A}_V$ is eventually periodic, if and only if, 
the Serre $C^*$-algebra $\mathscr{A}_V$ has real multiplication. 
\end{lemma}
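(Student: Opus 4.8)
The plan is to identify the Bratteli diagram of the AF-algebra $\mathbb{A}_V$ with a symbolic-dynamical object attached to the homeomorphism $T\colon V\to V$, and then translate the dichotomy ``periodic versus aperiodic'' into the dichotomy ``the dimension group $(K_0(\mathbb{A}_V),K_0^+(\mathbb{A}_V))$ is, respectively, is not isomorphic to a triple $(\Lambda,[I],K)$ coming from an order in a number field.'' Concretely, I would first recall that the incidence matrices of the Bratteli diagram of $\mathbb{A}_V$ are determined by Pimsner's construction of the enveloping AF-algebra of $C(V)\rtimes_{\alpha_T}\mathbf{Z}$ (the same construction already invoked in Lemma~\ref{lm3.1}); eventual periodicity of the diagram means that after deleting finitely many top levels all incidence matrices coincide with a single fixed primitive integer matrix $A$.

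Second, I would invoke the classification of stationary dimension groups: if the Bratteli diagram of $\mathbb{A}_V$ is eventually periodic with eventual incidence matrix $A$, then $(K_0(\mathbb{A}_V),K_0^+(\mathbb{A}_V))$ is the stationary dimension group $\varinjlim(\mathbf{Z}^n\xrightarrow{A}\mathbf{Z}^n\xrightarrow{A}\cdots)$, and by the Handelman/Effros description such a group is order-isomorphic to $(\Lambda,[I],K)$, where $K=\mathbf{Q}(\lambda)$ for $\lambda$ the Perron--Frobenius eigenvalue of $A$, $\Lambda$ is the order $\mathbf{Z}[\lambda]\subseteq O_K$ (or its appropriate finite-index refinement coming from the eigenvector entries), and $[I]$ is the ideal class of the fractional ideal generated by the coordinates of the Perron eigenvector; see \cite[Chapter 6]{E}, \cite{Han1} and \cite[Theorem 3.5.4]{N}. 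Conversely, every triple $(\Lambda,[I],K)$ with $\Lambda$ an order in a number field $K$ arises, up to isomorphism, as the dimension group of a stationary AF-algebra, i.e. of an AF-algebra with an eventually periodic Bratteli diagram --- this is precisely the realization half of the Handelman theorem. Combining the two directions gives: the Bratteli diagram of $\mathbb{A}_V$ is eventually periodic $\iff$ $(K_0(\mathbb{A}_V),K_0^+(\mathbb{A}_V))\cong(\Lambda,[I],K)$ for some order $\Lambda\subseteq O_K$ in a number field.

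Third, I would close the loop with Definition~\ref{dfn1.1} and part~(ii) of Lemma~\ref{lm3.1}. The inclusion of dimension groups $(K_0(\mathscr{A}_V),K_0^+(\mathscr{A}_V))\subseteq(K_0(\mathbb{A}_V),K_0^+(\mathbb{A}_V))$ is the crux: one needs that the sub-dimension-group inherits the ``number-field'' structure exactly when the ambient one has it. For the forward direction, if the diagram of $\mathbb{A}_V$ is eventually periodic then $(K_0(\mathbb{A}_V),K_0^+(\mathbb{A}_V))\cong(\Lambda',[I'],K)$, and $(K_0(\mathscr{A}_V),K_0^+(\mathscr{A}_V))$, being a full subgroup of finite index with the induced positive cone, is again of the form $(\Lambda,[I],K)$ for a suborder $\Lambda\subseteq\Lambda'\subseteq O_K$ --- so $\mathscr{A}_V$ has real multiplication in the sense of Definition~\ref{dfn1.1}. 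For the converse, if $\mathscr{A}_V$ has RM by $(\Lambda,[I],K)$, then $K_0(\mathscr{A}_V)$ is a finitely generated free abelian group carrying a multiplication by $\Lambda$ with eigenvalue a real algebraic integer $\lambda>1$; this endomorphism, transported to $K_0(\mathbb{A}_V)$ via the rational isomorphism induced by the finite inclusion, is again realized by a single primitive integer matrix, forcing the telescoped Bratteli diagram of $\mathbb{A}_V$ to be stationary, hence eventually periodic. The main obstacle I anticipate is controlling precisely this passage between $\mathscr{A}_V$ and $\mathbb{A}_V$ at the level of ordered groups --- showing that a finite-index full subgroup of a stationary dimension group is again stationary (with a possibly different, but still number-field, order) and conversely that RM on the subgroup propagates to stationarity of the enveloping diagram --- since the inclusion need not respect the shift endomorphisms on the nose and one must argue up to commensurability and telescoping. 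Everything else is bookkeeping with Perron--Frobenius eigendata and the Effros--Handelman dictionary.
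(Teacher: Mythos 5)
Your proposal follows essentially the same route as the paper: Handelman's correspondence between eventually periodic Bratteli diagrams and triples $(\Lambda,[I],K)$ settles the equivalence for the AF-algebra $\mathbb{A}_V$, and the dimension-group inclusion of Lemma~\ref{lm3.1}(ii) then transfers real multiplication down to $\mathscr{A}_V$ via a finite-index suborder. If anything, you are more explicit than the paper about the converse transfer (RM on $\mathscr{A}_V$ forcing stationarity of the diagram of $\mathbb{A}_V$), a direction the paper's proof leaves implicit.
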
 
\begin{proof}
(i) Let us first prove that the Bratteli diagram of  the Elliott-Serre $C^*$-algebra  $\mathbb{A}_V$ is eventually periodic, if and only if, 
the  $\mathbb{A}_V$ has real multiplication. 
Indeed, if the Bratteli diagram of the $AF$-algebra  $\mathbb{A}_V$ is eventually periodic, then the order-isomorphism
 of dimension group   $(K_0(\mathbb{A}_V), K_0^+(\mathbb{A}_V))$ is defined by the triple  $(\Lambda, [I], K)$ 
 [Handelman 1981] \cite[Section 1]{Han1}.  In other words, the Elliott-Serre $C^*$-algebra $\mathbb{A}_V$
 has real multiplication (Definition \ref{dfn1.1}).  Conversely, assume that  $\mathbb{A}_V$ has real multiplication,
 i.e. $(K_0(\mathbb{A}_V), K_0^+(\mathbb{A}_V))\cong (\Lambda, [I], K)$.  One can apply  [Handelman 1981] \cite[Theorem II, items (ii) and (v)]{Han1}
 saying that  purely periodic Bratteli diagram are in one-to-one correspondence with the triples $(\Lambda, [I], K)$, where $\Lambda\cong End_c G$ and 
 $G\cong (K_0(\mathbb{A}_V), K_0^+(\mathbb{A}_V))$.  The eventually periodic Bratteli diagrams are order-isomorphic to the latter, {\it ibid.} 
 Item (i) is proved.

\bigskip
(ii) It remains to show that the real multiplication of the Elliott-Serre $C^*$-algebra  $\mathbb{A}_V$ implies such 
of the Serre $C^*$-algebra $\mathscr{A}_V$.  Recall that  by item (ii) of Lemma \ref{lm3.1},  
one gets an inclusion of the dimension groups  $(K_0(\mathscr{A}_V), K_0^+(\mathscr{A}_V))\subseteq  (K_0(\mathbb{A}_V), K_0^+(\mathbb{A}_V))$.  
It is clear that if the dimension group $(K_0(\mathbb{A}_V), K_0^+(\mathbb{A}_V))$ has real multiplication, so does any subgroup of it. 
Indeed, if $(K_0(\mathbb{A}_V), K_0^+(\mathbb{A}_V))\cong (\Lambda, [I], K)$,  then $(K_0(\mathscr{A}_V), K_0^+(\mathscr{A}_V))\cong  (\Lambda', [I], K)$,
where $\Lambda'\subseteq\Lambda\subset\mathbf{R}$ is a  finite index subgroup  of the additive group of $\Lambda$.    
In other words, the dimension group  $(K_0(\mathscr{A}_V), K_0^+(\mathscr{A}_V))$ has real multiplication by $(\Lambda', [I], K).$

\bigskip
Lemma \ref{lm3.2} is proved.
\end{proof}

\begin{lemma}\label{lm3.3}
There exists a one-to-one correspondence between twists of the variety $V(k)$ and irreducible blocks of the 
Bratteli diagram of the Elliott-Serre $C^*$-algebra  $\mathbb{A}_V$. 
\end{lemma}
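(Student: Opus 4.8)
The plan is to establish Lemma \ref{lm3.3} by matching two combinatorial structures: on the number-theoretic side, the twists of $V(k)$ as parametrized by the Galois cohomology set $H^1(Gal~(\bar k|k), ~Aut ~V(k))$ (Theorem \ref{thm2.1}), and on the operator-algebraic side, the irreducible (primitive) blocks into which the Bratteli diagram of $\mathbb{A}_V$ decomposes. First I would recall that a twist $V'(k)$ of $V(k)$ is an algebraic variety isomorphic to $V$ over $\mathbf{C}$ but not necessarily over $k$; since the Serre $C^*$-algebra depends only on the $\mathbf{C}$-isomorphism class together with the chosen $\ast$-coherent automorphism $\sigma$, the assignment $V'(k)\mapsto \mathbb{A}_{V'}$ produces $AF$-algebras all sharing the same ambient dimension group $(K_0(\mathbb{A}_V),K_0^+(\mathbb{A}_V))$ but distinguished by the scale $\Sigma(\mathbb{A}_{V'})$ inherited from $C(V')\rtimes_{\alpha}\mathbf{Z}$. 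I would then invoke the fact (cited as \cite[Corollary 1.2]{Nik3}) that the $\bar k$-automorphisms that fail to descend to $k$ are exactly those realizing the shift along the connecting maps of the Bratteli diagram, so a $1$-cocycle $s\mapsto a_s$ determines a permutation of the vertices at each level of the diagram, hence a partition of the diagram into the orbits of this permutation — the irreducible blocks.

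The second step is to make the correspondence precise and bijective. In one direction, given a twist $V'(k)$, I would take the cohomology class $[a]\in H^1(Gal~(\bar k|k), ~Aut ~V(k))$ it represents, use the profinite identity \eqref{eq2.2} to reduce to a finite quotient $Gal~(\bar k|k)/U$ acting on $Aut~V(k)^U$, and observe that the resulting finite cocycle acts on the finite truncations of the Bratteli diagram by graph automorphisms; the equivalence classes of such truncations under this action stabilize (because the injectivity of the transition maps $H^1(G/U,\mathbf{A}^U)\to H^1(G,\mathbf{A})$ forces the block structure to be eventually constant), yielding a well-defined irreducible block $B_{[a]}$ of $\mathbb{A}_V$. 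In the other direction, an irreducible block $B$ of the diagram gives, via Pimsner's construction of $\mathbb{A}_V$ out of the crossed product $C(V)\rtimes_{\alpha}\mathbf{Z}$, a distinguished automorphism of $\rho(R)$ and hence — pulling back through the identification $R[t,t^{-1};\sigma]\cong B(V,\mathcal{L},\sigma)$ — a $\bar k$-automorphism of $V$ whose obstruction to $k$-rationality is a cocycle $a^B$; one checks $[a^{B_{[a]}}]=[a]$ and $B_{[a^B]}=B$, giving the bijection.

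I would then verify that the correspondence respects the natural equivalences on both sides: cohomologous cocycles $a_s'=b^{-1}a_sb$ correspond to blocks that are permuted into one another by the inner graph automorphism induced by $b\in Aut~V(k)$, and conversely isomorphic blocks of the Bratteli diagram arise only from cohomologous cocycles (here I would use that the dimension group triple $(K_0,K_0^+,\Sigma)$ is a complete isomorphism invariant, so two blocks coincide iff the associated scaled dimension groups agree iff the twists are $k$-isomorphic). Combining with Lemma \ref{lm3.2} — which ties eventual periodicity of the diagram to real multiplication — and with Lemma \ref{lm3.1}(ii), the block count equals the cardinality of $H^1(Gal~(\bar k|k), ~Aut ~V(k))$, which is what Part I of the proof of Theorem \ref{thm1.1} requires.

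The main obstacle I anticipate is the functoriality and well-definedness of the map from Galois cohomology classes to blocks: a priori a twist only specifies $V'$ up to $\mathbf{C}$-isomorphism, whereas the Bratteli diagram of $\mathbb{A}_{V'}$ depends on the crossed-product datum $(C(V'),\alpha)$, so I must argue that the $\ast$-coherent automorphism is canonically determined (up to the inner equivalence that the $1$-cocycle relation already builds in) — essentially that the passage $V(k)\rightsquigarrow \mathscr{A}_V\rightsquigarrow\mathbb{A}_V$ sends twists to blocks in a way independent of all auxiliary choices (the sheaf $\mathcal{L}$, the representation $\rho$, the Hilbert space $\mathcal{H}$). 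I expect this to follow from the uniqueness clause in Lemma \ref{lm3.1}(i) together with Morita invariance of the dimension group, but pinning down that the induced vertex permutations of the diagram depend only on the cohomology class, and not on the cocycle representative or the level of truncation, is the delicate point; the profinite limit formula \eqref{eq2.2} and the injectivity of its transition maps are exactly the tools I would lean on to close that gap.
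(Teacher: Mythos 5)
Your proposal takes a genuinely different route from the paper, and the route as described has a gap at its central step: you never actually construct the map in either direction. The engine of your argument is the claim that a $1$-cocycle $s\mapsto a_s$ in $Z^1(Gal~(\bar k|k), Aut~V(k))$ "acts on the finite truncations of the Bratteli diagram by graph automorphisms," so that its orbits carve out the irreducible blocks. No such action is established anywhere in the paper or in the results you cite: \cite[Corollary 1.2]{Nik3}, as used in the paper, says only that $k$-isomorphism of varieties corresponds to isomorphism of Serre $C^*$-algebras and $\mathbf{C}$-isomorphism to Morita equivalence; it says nothing about Galois automorphisms "realizing the shift along the connecting maps." Likewise the inverse direction — extracting a cocycle $a^B$ from a block $B$ by "pulling back through $R[t,t^{-1};\sigma]\cong B(V,\mathcal{L},\sigma)$" — is asserted but not defined, and the verification $[a^{B_{[a]}}]=[a]$, $B_{[a^B]}=B$ is exactly the content of the lemma, so writing "one checks" there is circular. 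The injectivity of the maps $H^1(G/U,\mathbf{A}^U)\to H^1(G,\mathbf{A})$ in \eqref{eq2.2} cannot repair this: it is a statement internal to Galois cohomology and carries no information about Bratteli diagrams until a comparison map has been built.

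The paper's own proof avoids Galois cohomology entirely at this stage (Theorem \ref{thm2.1} enters only later, in Corollary \ref{cor3.4}, to bound the number of twists). It runs: (a) by \cite[Corollary 1.2]{Nik3}, twists of $V(k)$ are the isomorphism classes of Serre $C^*$-algebras inside the single Morita equivalence class of $\mathscr{A}_V$, and this Morita equivalence passes to the enveloping $AF$-algebras via Lemma \ref{lm3.1}; (b) two $AF$-algebras are Morita equivalent iff their Bratteli diagrams agree up to finitely many initial irreducible blocks, so deleting initial blocks of \eqref{eq3.1} one at a time produces the distinct isomorphism classes within the Morita class, indexed by the blocks $B_i$; (c) matching (a) with (b) gives the bijection. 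If you want to salvage your approach, the missing ingredient is precisely step (b) — the tail-equivalence characterization of Morita equivalence for $AF$-algebras — which replaces your undefined cocycle action and makes the indexing by blocks immediate. Your closing paragraph correctly identifies well-definedness (independence of $\mathcal{L}$, $\rho$, $\mathcal{H}$) as a concern, but that concern is downstream of the more basic problem that the correspondence itself has not been constructed.
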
 
\begin{proof}
(i) Let $V(k)$ and $V'(k)$ be complex projective varieties over 
a number field $k\subset\mathbf{C}$.  
Recall \cite[Corollary 1.2]{Nik3} that: 
(i) $V(k)$ and $V'(k)$ are isomorphic over $k$  if and only if the Serre $C^*$-algebras
$\mathscr{A}_V\cong\mathscr{A}_{V'}$ are isomorphic
and  (ii)  $V(k)$ and $V'(k)$ are isomorphic over $\mathbf{C}$  if and only if the Serre  $C^*$-algebras
$\mathscr{A}_V$ and $\mathscr{A}_{V'}$ are Morita equivalent, i.e. $\mathscr{A}_V\otimes\mathcal{K}\cong\mathscr{A}_{V'}\otimes\mathcal{K}$,
where $\mathcal{K}$ is the $C^*$-algebra of compact operators. 
Using the inclusion  $\mathscr{A}_V\subset \mathbb{A}_V$ (Lemma \ref{lm3.1}),
one can extend the Morita equivalence between the Serre  $C^*$-algebras
$\mathscr{A}_V$ and $\mathscr{A}_{V'}$ to such between the $AF$-algebras $\mathbb{A}_V$ and $\mathbb{A}_{V'}$.

\bigskip
(ii) Recall that the $AF$-algebras $\mathbb{A}_V$ and $\mathbb{A}_{V'}$ are Morita equivalent, if and only if, 
their Bratteli diagrams coincide except for a finite number of the irreducible blocks. 
In particular, taking away one block at a time from the left of the infinite Bratteli diagram
\begin{equation}\label{eq3.1}
\mathbf{Z}^n\buildrel\rm B_1\over\longrightarrow \mathbf{Z}^n
   \buildrel\rm B_2\over\longrightarrow
    \mathbf{Z}^n
   \buildrel\rm B_3\over\longrightarrow
   \dots,
\end{equation}
gives us an infinite sequence of the $AF$-algebras which are  Morita equivalent to  $\mathbb{A}_V$.
Clearly, these $AF$-algebras can be indexed by the blocks $B_i$. 

\bigskip
(iii)  It remains to apply item (i)  to the obtained Morita equivalence classes of the $AF$-algebra   $\mathbb{A}_V$
by noticing that they correspond to the twists of the variety $V(k)$.

\bigskip
Lemma \ref{lm3.3} is proved.
\end{proof}

\begin{corollary}\label{cor3.4}
The Bratteli diagram of  $\mathbb{A}_V$ is eventually periodic 
whenever variety $V$ is defined over a number field $k\subset\mathbf{C}$. 
\end{corollary}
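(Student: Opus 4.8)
The plan is to derive Corollary \ref{cor3.4} as an immediate consequence of the lemmas already established, combined with Serre's finiteness theorem for twists (Theorem \ref{thm2.1}). The whole point is that this is a \emph{contrapositive} packaging of Lemmas \ref{lm3.2} and \ref{lm3.3}, so the proof should be short and should not introduce any new machinery.

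First I would argue by contradiction: suppose $V$ is defined over a number field $k\subset\mathbf{C}$ but the Bratteli diagram of the Elliott-Serre $C^*$-algebra $\mathbb{A}_V$ is \emph{not} eventually periodic. By Lemma \ref{lm3.2} (item (i), the part about $\mathbb{A}_V$ itself), non-periodicity of the Bratteli diagram is equivalent to $\mathbb{A}_V$ having no real multiplication; and by the Morita-invariance description recalled in the proof of Lemma \ref{lm3.3}(ii), an aperiodic Bratteli diagram $\mathbf{Z}^n\to\mathbf{Z}^n\to\cdots$ contains infinitely many pairwise non-Morita-equivalent truncations, hence infinitely many distinct irreducible ``blocks'' up to the equivalence that matters. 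Then Lemma \ref{lm3.3} gives a one-to-one correspondence between these infinitely many blocks and the twists of $V(k)$, so $V(k)$ admits infinitely many pairwise non-isomorphic twists over $k$. This contradicts Theorem \ref{thm2.1}, since $H^1(Gal~(\bar k|k),~Aut~V(k))$ is a finite set for a projective variety over a number field (the finiteness coming from (\ref{eq2.2}) together with finiteness of the relevant cohomology at each finite level). Hence the Bratteli diagram of $\mathbb{A}_V$ must be eventually periodic.

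The only place where genuine care is needed — and what I expect to be the main obstacle — is the step that converts ``aperiodic Bratteli diagram'' into ``infinitely many twists.'' Lemma \ref{lm3.3} as stated gives a bijection between twists and irreducible blocks, but one must be sure that an aperiodic diagram really does produce \emph{infinitely many distinct} blocks under the equivalence relation implicit in the statement (coincidence of diagrams up to finitely many blocks, i.e.\ Morita equivalence of the associated $AF$-algebras); a priori a diagram could be aperiodic while all its tail-truncations remain Morita equivalent. I would handle this by invoking the Effros--Handelman--Shen / Elliott dichotomy already cited in the excerpt: a dimension group arising from a Bratteli diagram is of the form $(\Lambda, [I], K)$ — equivalently $\mathbb{A}_V$ has RM — precisely when the diagram is eventually periodic, so if it is aperiodic the truncations cannot all be order-isomorphic, and Lemma \ref{lm3.3}'s correspondence then forces infinitely many twists. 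With that observation in place, everything else is bookkeeping, and the corollary follows by contradiction as above.

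\begin{proof}
Suppose to the contrary that $V$ is defined over a number field $k\subset\mathbf{C}$ yet the Bratteli diagram of the Elliott-Serre $C^*$-algebra $\mathbb{A}_V$ is not eventually periodic. By item (i) of Lemma \ref{lm3.2} applied to $\mathbb{A}_V$, an eventually periodic Bratteli diagram is equivalent to $\mathbb{A}_V$ having real multiplication; hence aperiodicity means that the dimension group $(K_0(\mathbb{A}_V), K_0^+(\mathbb{A}_V))$ is not order-isomorphic to any triple $(\Lambda, [I], K)$. Consequently the tail-truncations obtained by deleting finitely many blocks from the left of (\ref{eq3.1}) cannot all be order-isomorphic to one another, so $\mathbb{A}_V$ is Morita equivalent to infinitely many pairwise non-isomorphic $AF$-algebras, indexed by infinitely many distinct irreducible blocks of its Bratteli diagram. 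By Lemma \ref{lm3.3} these blocks are in one-to-one correspondence with the twists of $V(k)$, so $V(k)$ has infinitely many pairwise non-isomorphic twists over $k$. This contradicts Theorem \ref{thm2.1}, since $H^1(Gal~(\bar k|k),~Aut~V(k))$ is finite. Therefore the Bratteli diagram of $\mathbb{A}_V$ is eventually periodic whenever $V$ is defined over a number field $k\subset\mathbf{C}$.
\end{proof}
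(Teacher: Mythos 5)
Your proof is correct and follows essentially the same route as the paper's: argue by contradiction, convert aperiodicity of the Bratteli diagram into infinitely many pairwise distinct irreducible blocks, invoke Lemma \ref{lm3.3} to get infinitely many non-isomorphic twists of $V(k)$, and contradict the finiteness of $H^1(Gal~(\bar k|k), ~Aut~V(k))$ from Theorem \ref{thm2.1}. The only difference is that you explicitly justify the step ``aperiodic $\Rightarrow$ infinitely many distinct blocks'' via Lemma \ref{lm3.2} and the Handelman classification, whereas the paper simply asserts it; this is a refinement of the same argument rather than a different approach.
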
 
\begin{proof}
(i) To the contrary, assume that the variety $V(k)$ is defined over a number field $k$,  
but the Bratteli diagram of the Elliott-Serre $C^*$-algebra   $\mathbb{A}_V$ is 
aperiodic.  
In this case one gets an infinite number of the pairwise distinct irreducible
blocks $B_i$ in the sequence (\ref{eq3.1}). By Lemma \ref{lm3.3}, one obtains
infinitely many pairwise non-isomorphic twists of the variety $V(k)$.    
The latter contradicts  Theorem \ref{thm2.1} saying that the set 
$H^1(Gal~(\bar k|k), ~Aut ~V(k))$ is finite for each finite Galois extension $k'\subset\bar k$
of the number field $k$. 

\bigskip
(ii) We conclude therefore that the sequence $B_i$ must stabilize, i.e. there exists an integer $N>0$, 
 such that  $B_i=Const$ for all $i\ge N$.  In other words, the Bratteli diagram    of  the 
 Elliott-Serre $C^*$-algebra $\mathbb{A}_V$ is eventually periodic. 
 
 \bigskip
 Corollary \ref{cor3.4} is proved.
  \end{proof}

\bigskip
Part I of Theorem \ref{thm1.1} follows from Lemma \ref{lm3.2} and Corollary \ref{cor3.4}.

\bigskip
{\bf Part II.} Let us prove ``only if''  part of  Theorem \ref{thm1.1}.
To the contrary, assume that the Serre $C^*$-algebra  $\mathscr{A}_V$ has real multiplication,
but the projective variety $V$ is defines over the field of complex numbers $\mathbf{C}$. 
Consider the enveloping Elliott-Serre $C^*$-algebra  $\mathbb{A}_V$. 
Since $V$ is defined over $\mathbf{C}$, there are infinitely many projective varieties $V'$
isomorphic to $V$ over $\mathbf{C}$. In view of  \cite[Corollary 1.2]{Nik3},
there are infinitely many Serre $C^*$ algebras $\mathscr{A}_{V'}$ which are 
Morita equivalent to $\mathscr{A}_V$.  By Lemma \ref{lm3.3}, the Bratteli diagram
of the $AF$-algebra $\mathbb{A}_V$ must be aperiodic in this case. Hence 
the Serre $C^*$-algebra has no real multiplication (Lemma \ref{lm3.2}). 
The contradiction finishes the proof of Part II of Theorem \ref{thm1.1}.

\bigskip
Theorem \ref{thm1.1} is proved.

\section{Applications}
We review some applications of Theorem \ref{thm1.1}
to  elliptic curves over number fields, Shafarevich-Tate groups 
of abelian varieties and height functions.

\subsection{Elliptic curves over number fields}
The simplest non-trivial case  when $V\cong \mathscr{E}(k)$ is an elliptic curve
over the number field $k$.  Recall that non-singular elliptic curves are identified via the Weierstrass $\wp$-function with
the complex tori $\mathbf{C}/L_{\tau}$, where $L_{\tau}=\mathbf{Z}+\mathbf{Z}\tau$
is a lattice defined by modulus $\tau\in\mathbf{C}$ such that $Im~\tau>0$. 
In particular, the moduli space $\mathscr{M}_V$ of  elliptic curves  is one-dimensional, i.e.
\begin{equation}
m=\dim_{\mathbf{C}}\mathscr{M}_V=1.
\end{equation}
We calculate  $\deg ~(K|\mathbf{Q})=2m=2$, i.e the number field $K$ is a real quadratic
extension of $\mathbf{Q}$.   We apply Theorem \ref{thm1.1} to conclude that 
there exists an order $\Lambda\subseteq O_K$,  such that for an ideal class $[I]\subset\Lambda$
the following dimension groups are isomorphic:
\begin{equation}\label{eq4.2}
(K_0(\mathscr{A}_{\mathscr{E}(k)}), K_0^+(\mathscr{A}_{\mathscr{E}(k)}))\cong (\Lambda, [I], K).
\end{equation}

\bigskip
In fact, the Serre $C^*$-algebra $\mathscr{A}_{\mathscr{E}(k)}$ and its enveloping AF-algebra
 $\mathbb{A}_{\mathscr{E}(k)}$ admit a remarkable independent description. 
Indeed, all dimension groups of rank $2$ arise as the $K_0$-groups of the Effros-Shen algebras $\mathbb{A}_{\theta}$
[Effros \& Shen 1980] \cite{EfrShe1}. Such an algebra is given by the continued fraction of the real constant $\theta$.
In particular, the condition (\ref{eq4.2}) implies $\theta\in K$ is a quadratic irrational number, i.e. the continued fraction 
of $\theta$ is eventually periodic.  Thus  the Elliott-Serre $C^*$-algebra  $\mathbb{A}_{\mathscr{E}(k)}\cong \mathbb{A}_{RM}$,
where $\mathbb{A}_{RM}$ is the Effros-Shen algebra given by an eventually periodic continued fraction. 
Moreover, each  $\mathbb{A}_{\theta}$ contains a copy of the noncommutative torus $\mathscr{A}_{\theta}$ having the same
dimension group [Rieffel 1990] \cite{Rie1}. We conclude therefore that $\mathscr{A}_{\mathscr{E}(k)}\cong \mathscr{A}_{RM}$,
where $\mathscr{A}_{RM}$ is a noncommutative torus with real multiplication.

\bigskip
Following example \ref{exm1.2}, let us consider a special case of elliptic curves with complex multiplication
$\mathscr{E}_{CM}$.  This case correspond to the lattices $L_{\tau}$,  where  $\tau\in\mathbf{Q}(\sqrt{-d})$
an imaginary quadratic number.  It is known that such elliptic curves are defined over the field 
$k\cong\mathbf{Q}(j( \mathscr{E}_{CM}))$,
where $j( \mathscr{E}_{CM})$ is the $j$-invariant of  $\mathscr{E}_{CM}$. 
The triple  $(\Lambda, [I], K)$ in this case can be given explicitly.
\begin{theorem}{\bf (\cite{Nik1})}
Let $\mathscr{E}_{CM}$ be an elliptic curve with complex multiplication 
by an order of conductor $f$ in the field $\mathbf{Q}(\sqrt{-d})$. 
Then $\Lambda$ is an order of conductor $f'$ in the field  $\mathbf{Q}(\sqrt{d})$,
where $f'$ is the least integer satisfying the equation 
$|Cl(\mathbf{Z}+f'O_{\mathbf{Q}(\sqrt{d})})|= |Cl(\mathbf{Z}+fO_{\mathbf{Q}(\sqrt{-d})})|$
and $Cl (R)$ is the class group of the ring $R$. 
\end{theorem}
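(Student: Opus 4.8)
The plan is to combine Theorem \ref{thm1.1} (in the special form of Example \ref{exm1.2}) with two concrete ingredients: the action of the functor $\mathscr{E}\mapsto\mathscr{A}_{\mathscr{E}}$ on the modulus of a CM point, and a count of Morita equivalence classes. By Theorem \ref{thm1.1}, $\mathscr{A}_{\mathscr{E}_{CM}}\cong\mathscr{A}_{\theta}$ is a noncommutative torus with real multiplication, so its dimension group is a pseudo-lattice $\mathbf{Z}+\mathbf{Z}\theta\subset\mathbf{R}$ with $\theta$ a quadratic irrationality and $\Lambda\cong\mathrm{End}(\mathbf{Z}+\mathbf{Z}\theta)$ an order in $K=\mathbf{Q}(\theta)$. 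To identify $K$, I would write $\tau$ as a root of a primitive integral form $a\tau^{2}+b\tau+c=0$ whose discriminant $b^{2}-4ac$ equals the discriminant $f^{2}\,\mathrm{disc}(k)$ of the order $\mathbf{Z}+fO_{k}$ in $k=\mathbf{Q}(\sqrt{-d})$. Then $\mathrm{Re}\,\tau=-b/2a\in\mathbf{Q}$ and $\mathrm{Im}\,\tau=\sqrt{-f^{2}\,\mathrm{disc}(k)}/2a$ lies in $\mathbf{Q}(\sqrt{d})$, since $-\mathrm{disc}(k)$ equals $d$ up to a rational square. The modulus $\theta$ is produced from a measured foliation on the real two-torus underlying $\mathscr{E}_{\tau}$ and is therefore a $\mathbf{Q}$-rational (in fact fractional-linear) expression in $\mathrm{Re}\,\tau$ and $\mathrm{Im}\,\tau$, well defined only up to the action of $\mathrm{GL}_{2}(\mathbf{Z})$ on $\mathbf{R}$; hence $\theta\in\mathbf{Q}(\sqrt{d})$ and $K=\mathbf{Q}(\sqrt{d})$.

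Next I would determine the conductor $f'$ of $\Lambda$ by a cardinality argument. The set of $\mathbf{C}$-isomorphism classes of elliptic curves with complex multiplication by $\mathbf{Z}+fO_{k}$ is a principal homogeneous space under $Cl(\mathbf{Z}+fO_{k})$, hence has $|Cl(\mathbf{Z}+fO_{k})|$ elements. By \cite[Corollary 1.2]{Nik3} two such curves are isomorphic over $\mathbf{C}$ if and only if their Serre $C^{*}$-algebras are Morita equivalent, and the construction is injective on these classes; together with the explicit shape of $\theta$ from the first step this shows that the functor maps our torsor bijectively onto a family of noncommutative tori with real multiplication. Since the functor respects the endomorphism ring of the dimension group, this family consists precisely of the Morita equivalence classes of tori with real multiplication by one fixed order $\Lambda=\mathbf{Z}+f'O_{K}$, and by Handelman's classification of the corresponding dimension groups \cite[Theorem II]{Han1} together with the Effros--Shen realization \cite{EfrShe1} these classes form a principal homogeneous space under $Cl(\mathbf{Z}+f'O_{K})$. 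Equating cardinalities yields $|Cl(\mathbf{Z}+f'O_{K})|=|Cl(\mathbf{Z}+fO_{k})|$.

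It remains to see that $f'$ is the \emph{least} integer with this property. Here I would use that $\Lambda=\mathrm{End}(\mathbf{Z}+\mathbf{Z}\theta)$ is read off intrinsically from the eventually periodic continued fraction of $\theta$: by the Effros--Shen and Handelman correspondence between purely periodic Bratteli diagrams and triples $(\Lambda,[I],K)$, the endomorphism ring of the pseudo-lattice is generated by the product of the partial-quotient matrices over one \emph{minimal} period, so its conductor is the smallest one compatible with the class-number identity above. Equivalently, if some $f''<f'$ also satisfied $|Cl(\mathbf{Z}+f''O_{K})|=|Cl(\mathbf{Z}+fO_{k})|$, then the $Cl(\mathbf{Z}+f''O_{K})$-torsor of noncommutative tori would, via \cite[Corollary 1.2]{Nik3} and the injectivity of the functor, already be exhausted by the image of the conductor-$f$ CM curves, forcing $\mathbf{Z}+f''O_{K}$ and $\mathbf{Z}+f'O_{K}$ to define the same pseudo-lattice and hence $f''=f'$. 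Combined with $K=\mathbf{Q}(\sqrt{d})$ this gives the theorem.

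The main obstacle is exactly the minimality of $f'$. Class numbers of real quadratic orders are not monotone in the conductor, so ``least integer'' cannot be extracted from the cardinality count alone; one genuinely needs the explicit form of $\theta=F(\mathscr{E}_{\tau})$ (or at least of the minimal period of its continued fraction) in terms of the CM datum $(f,d)$, and this is the computational heart of the argument; checking a few small cases, such as $f=1$ with $k$ of class number one, is a useful sanity test. A secondary point to verify with care is that the functor respects the order structure of the dimension groups strongly enough that an entire $Cl(\mathbf{Z}+fO_{k})$-torsor lands in a single real-multiplication class; this rests on the Morita-invariance statements recalled in Section 2.2 and on \cite[Corollary 1.2]{Nik3}.
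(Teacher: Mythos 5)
The paper itself does not prove this statement: it is quoted from \cite{Nik1} (compare Example \ref{exm1.2}), so there is no in-text argument to measure yours against. Judged on its own terms, your outline follows what is surely the intended strategy --- identify $K=\mathbf{Q}(\sqrt{d})$ from the explicit form of the modulus $\theta$, then equate cardinalities of the $Cl(\mathbf{Z}+fO_{k})$-torsor of CM curves and the $Cl(\Lambda)$-torsor of RM tori via the dictionary of \cite[Corollary 1.2]{Nik3} --- but it has two genuine gaps. The lesser one is surjectivity in the counting step: your argument shows that the functor maps the conductor-$f$ torsor injectively into the set of Morita classes of tori with real multiplication by $\Lambda=\mathbf{Z}+f'O_{K}$, which only yields $|Cl(\mathbf{Z}+fO_{k})|\le |Cl(\Lambda)|$; without showing that every ideal class of $\Lambda$ is realized in the image, the equality of class numbers does not follow.

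The serious gap is the one you flag yourself: the minimality of $f'$. Your fallback argument is circular. If some $f''<f'$ also satisfies $|Cl(\mathbf{Z}+f''O_{K})|=|Cl(\mathbf{Z}+fO_{k})|$, no contradiction arises from the counting: the tori with real multiplication by $\mathbf{Z}+f''O_{K}$ form a torsor disjoint from the image of the conductor-$f$ curves (distinct orders give non-Morita-equivalent tori), so nothing forces $f''=f'$. Likewise, the observation that $\mathrm{End}(\mathbf{Z}+\mathbf{Z}\theta)$ is read off from the minimal period of the continued fraction of $\theta$ determines the conductor of $\Lambda$ intrinsically, but gives no a priori reason that this conductor is the \emph{least} solution of the class-number equation. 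Since class numbers of real quadratic orders are not monotone in the conductor, the characterization of $f'$ as the least such integer is genuinely additional information; it can only come from the explicit computation of $\theta$ (equivalently, of the period of its continued fraction) in terms of the CM datum $(f,d)$, which is precisely the content of \cite{Nik1} and is absent from your sketch. As written, your argument establishes at best that $\Lambda$ is an order in $\mathbf{Q}(\sqrt{d})$ whose class number equals $|Cl(\mathbf{Z}+fO_{k})|$, not the stated formula for its conductor.
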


\subsection{Shafarevich-Tate groups of abelian varieties}
Recall that if a diophantine  equation has an integer solution, then using the reduction
modulo  prime $p$, one gets a solution of the 
 equation lying in the finite field $\mathbf{F}_p$ and  a solution 
in the field of real numbers  $\mathbf{R}$. The equation is said to satisfy
the Hasse principle, if the converse  is true.  
For instance,  the quadratic equations  satisfy the Hasse principle,
while  the equation $x^4-17=2y^2$ has a solution over $\mathbf{R}$ 
and  each  $\mathbf{F}_p$,   but no rational solutions.

Let $A(k)$ be an abelian variety over the number field $k$
which we assume to be simple, i.e. the $A(k)$ has no proper sub-abelian varieties 
over $k$. The Shafarevich-Tate group $\Sha (A)$ measures the failure of the 
Hasse principle for $A(k)$. 
Denote by $k_v$ the completion of $k$ at the (finite or infinite) place $v$. 
Consider the Weil-Ch\^atelet group $WC(A(k))$ of the abelian 
variety $A(k)$ and the group homomorphism:
\begin{equation}\label{eq4.3}
WC(A(k))\rightarrow \prod_v WC(A(k_v)). 
\end{equation}
The Shafarevich-Tate group $\Sha (A)$ 
 is defined as the kernel of homomorphism (\ref{eq4.3}). 
The  variety $A(k)$ satisfies the Hasse principle, 
if and only if,  the group $\Sha (A)$ is trivial.

To calculate  $\Sha (A)$, we apply Theorem \ref{thm1.1} with  $V\cong A(k)$
and  get a triple $(\Lambda, [I], K)$.   Recall that the moduli space $\mathscr{M}_V$ of  
an $n$-dimensional abelian variety  has complex dimension $\frac{1}{2} n(n+1)$, i.e.
\begin{equation}
m=\dim_{\mathbf{C}}\mathscr{M}_V=\frac{n(n+1)}{2}.
\end{equation}
We calculate  $\deg ~(K|\mathbf{Q})=2m=n(n+1)$.
Let us recast   (\ref{eq4.3}) in terms of  the triple $(\Lambda, [I], K)$. 
Recall that an isomorphism class of the Serre $C^*$-algebra $\mathscr{A}_V$ is defined 
by the scaled dimension group $\left(K_0(\mathscr{A}_{V}, K_0^+(\mathscr{A}_{V}),
\Sigma(\mathscr{A}_{V}\right)$ consisting of the $K_0$-group, the positive cone $K_0^+$ and 
the scale $\Sigma$ of   the algebra $\mathscr{A}_{V}$ [Blackadar 1986] \cite[Section 6]{B}.
  Using  the Minkowski question-mark function, it can be shown that  $\Sigma(\mathscr{A}_{V})$  
 is  a torsion group, such that   $WC(\mathscr{A}_K)\cong \Sigma(\mathscr{A}_{V})$. 
The RHS  of (\ref{eq4.3}) corresponds to the crossed product $C^*$-algebra
$\mathscr{A}_{V}\rtimes_{L_v}\mathbf{Z}$,  where $L_v$ is the Frobenius endomorphsm of $\mathscr{A}_{V}$;
we refer the reader to \cite{Nik2} for the notation and details.   
It is proved,  that $\prod_v WC(\mathscr{A}_{K_v})\cong \prod_v K_0\left(\mathscr{A}_{V}\rtimes_{L_v}\mathbf{Z}\right)$.
Thus (\ref{eq4.3}) can be written in the form:
\begin{equation}\label{eq4.5}
\Sigma(\mathscr{A}_{V})\to  \prod_v K_0\left(\mathscr{A}_{V}\rtimes_{L_v}\mathbf{Z}\right).
\end{equation}

\smallskip
Both sides of (\ref{eq4.5}) are functions of a single positive matrix $B\in GL(2n, \mathbf{Z})$.   
However,  the LHS of  (\ref{eq4.5})  depends on the similarity class of 
$B$, while the RHS of (\ref{eq4.5})  depends   on the characteristic polynomial  of $B$. 
This observation is critical, since it puts the  elements of  $\Sha (A)$
into a one-to-one correspondence with the similarity classes of matrices having the same
characteristic polynomial.  The latter is an old problem of linear algebra and 
it is  known,  that the number of  such classes  is finite.  
They  correspond to  the ideal classes $Cl~(\Lambda)$ of an order   $\Lambda$ in  the field 
$K\cong\mathbf{Q}(\lambda_B)$, where  $\lambda_B$  is  the Perron-Frobenius eigenvalue of matrix $B$.  
Let  $Cl~(\Lambda)\cong \left(\mathbf{Z}/2^k\mathbf{Z} \right) \oplus Cl_{~\mathbf{odd}}(\Lambda)$ for $k\ge 0$. 
Using the Atiyah pairing between the K-theory and   the  K-homology, one gets   the following result.  
\begin{theorem}\label{thm4.2}
{\bf (\cite{Nik2})}
The Shafarevich-Tate group of an abelian variety $A(k)$
is a finite group given by the formulas:
\begin{equation}
\Sha (A)\cong
\begin{cases}
Cl~(\Lambda)\oplus Cl~(\Lambda), & \mbox{if $k$ is even,}  \cr
\left(\mathbf{Z}/2^k\mathbf{Z}\right) \oplus Cl_{~\mathbf{odd}}~(\Lambda)\oplus  Cl_{~\mathbf{odd}}~(\Lambda),  & \mbox{if $k$ is odd.} 
\end{cases}
\end{equation}
\end{theorem}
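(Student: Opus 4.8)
The plan is to transport the arithmetic definition of $\Sha(A)$ through the correspondence of Theorem \ref{thm1.1} until it becomes a question about similarity classes of integral matrices, and then to extract the answer from the Latimer--MacDuffee theorem together with a duality (Atiyah pairing) argument accounting for the square shape of the formula. First I would apply Theorem \ref{thm1.1} to $V \cong A(k)$: since $A$ is defined over the number field $k$, the Serre $C^*$-algebra $\mathscr{A}_V$ has real multiplication by a triple $(\Lambda, [I], K)$ with $\deg\,(K\,|\,\mathbf{Q}) = 2m = n(n+1)$. Fix a positive matrix $B \in GL(2n,\mathbf{Z})$ implementing the RM on $K_0(\mathscr{A}_V) \cong \mathbf{Z}^{2n}$; then $K \cong \mathbf{Q}(\lambda_B)$ for the Perron--Frobenius eigenvalue $\lambda_B$, and $\Lambda$ is the order of $K$ attached to $B$.

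Next I would rewrite the defining sequence (\ref{eq4.3}) for $\Sha(A)$ in the operator-algebraic form (\ref{eq4.5}). On the left, one identifies $WC(A(k)) \cong WC(\mathscr{A}_K) \cong \Sigma(\mathscr{A}_V)$, checking along the way that the scale $\Sigma(\mathscr{A}_V)$ is a torsion group (this is where the Minkowski question-mark function enters). On the right, one identifies each local factor $WC(A(k_v))$ with $K_0$ of the crossed product $\mathscr{A}_V \rtimes_{L_v}\mathbf{Z}$ by the Frobenius endomorphism $L_v$, so that $\prod_v WC(A(k_v)) \cong \prod_v K_0(\mathscr{A}_V \rtimes_{L_v}\mathbf{Z})$, and $\Sha(A)$ becomes the kernel of (\ref{eq4.5}). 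The decisive observation is that the domain of (\ref{eq4.5}) is an invariant of the $GL(2n,\mathbf{Z})$-conjugacy class of $B$, whereas each local target, being built from a crossed product by a Frobenius, depends on $B$ only through its characteristic polynomial $\chi_B$. Consequently the kernel of (\ref{eq4.5}) is in bijection with the set of $GL(2n,\mathbf{Z})$-similarity classes of integral matrices with characteristic polynomial $\chi_B$; by Latimer--MacDuffee this set is finite and in natural bijection with the ideal class group $Cl(\Lambda)$.

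It remains to explain the squaring and the parity distinction. The Atiyah pairing between $K_0$ and $K^0$ (K-theory and K-homology) of the algebra in play essentially identifies the K-homology with a dual of the K-theory; since a finite abelian group is isomorphic to its dual, this yields $\Sha(A) \cong Cl(\Lambda) \oplus Cl(\Lambda)$ on the odd primes and on the $2$-primary part whenever the exponent $k$ in $Cl(\Lambda) \cong (\mathbf{Z}/2^k\mathbf{Z}) \oplus Cl_{\mathbf{odd}}(\Lambda)$ is even. When $k$ is odd, the induced self-pairing on the $2$-part is merely antisymmetric rather than alternating, the diagonal fails to be isotropic, and the $2$-primary component collapses to a single copy $\mathbf{Z}/2^k\mathbf{Z}$; combining this with the odd part doubled gives the second formula.

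The step I expect to be the main obstacle is the one identifying the kernel of (\ref{eq4.5}) with the Latimer--MacDuffee set: one must prove that the product map genuinely factors through $\chi_B$ on each local factor while the source records the full $GL(2n,\mathbf{Z})$-similarity class, so that the kernel is \emph{exactly} that set of similarity classes and neither coarser nor finer. The $2$-adic bookkeeping in the last step, distinguishing an alternating from a merely antisymmetric pairing in the manner of Poonen--Stoll, is the second delicate ingredient; everything else reduces to standard properties of dimension groups, crossed products, and the Latimer--MacDuffee correspondence.
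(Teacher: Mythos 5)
Your proposal follows essentially the same route as the paper's own account in Section 4.2: apply Theorem \ref{thm1.1} to $V\cong A(k)$, recast the Weil--Ch\^atelet sequence (\ref{eq4.3}) as (\ref{eq4.5}), identify $\Sha(A)$ with the similarity classes of integral matrices sharing a characteristic polynomial (the Latimer--MacDuffee correspondence with $Cl(\Lambda)$), and invoke the Atiyah pairing to account for the doubling and the parity split at $2$. Note that the paper itself only sketches this argument and defers all details to \cite{Nik2}, so the delicate steps you flag (the exact identification of the kernel of (\ref{eq4.5}) and the $2$-adic bookkeeping) are likewise not verified here.
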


\subsection{Height functions \cite{Nik4}}
The height function is a measure of complexity of solutions 
of the diophantine equations.  For example, if $P^n(\mathbf{Q})$
is the $n$-dimensional projective space over $\mathbf{Q}$,  then the height of a
point $x\in P^n(\mathbf{Q})$ is given by the formula $H(x)=\max\{|x_0|,\dots,|x_n|\}$,
where $x_0,\dots,x_n\in\mathbf{Z}$ and $\mathbf{gcd}(x_0,\dots,x_n)=1$.
In general, if $V(\mathbf{Q})$ is a projective variety,  then  the height of
$x\in V(\mathbf{Q})$ is defined as  $H(x)=H(\phi(x))$,
where $\phi: V(\mathbf{Q})\hookrightarrow P^n(\mathbf{Q})$
is an embedding of $V(\mathbf{Q})$  into the  space 
$P^n(\mathbf{Q})$. 
The fundamental property of $H(x)$ says that the number of points $x\in V(\mathbf{Q})$, 
such that $H(x)\le T$,  is  finite for any constant $T>0$.  Thus one can 
define  a counting function $\mathcal{N}(V(\mathbf{Q}), T)=\# \{x\in V(\mathbf{Q}) ~|~ H(x)\le T\}$. 
Such functions depend on the topology of the underlying variety. 
In particular, if the counting function is bounded by a constant, then the set $V(\mathbf{Q})$
must be finite. 

One can use Theorem \ref{thm1.1} to define a height function on the variety $V(k)$. 
Indeed, let   $(K_0(\mathscr{A}_V), K_0^+(\mathscr{A}_V), \Sigma(\mathscr{A}_V))$ be the scaled dimension group   
[Blackadar 1986] \cite[Section 6.1]{B} of $\mathscr{A}_V$ given by Theorem \ref{thm1.1}. 
  Minkowski question-mark function maps the scale $\Sigma(\mathscr{A}_V)\subset K_0(\mathscr{A}_V)$
 into a  subgroup of the additive torsion group $\mathbf{Q}^n/\mathbf{Z}^n$.
  On the other hand, there exists a proper inclusion of the sets $V(k)\subseteq  K_0(\mathcal{A}_V)$
 unless the Shafarevich-Tate group $\Sha(V(k))$ is trivial  \cite[Lemma 3.1]{Nik4}. 
In other words,  $V(k)\subseteq \mathbf{Q}^n/\mathbf{Z}^n$. Clearly, this inclusion can be used to   define the height  $\mathscr{H}(x)$
of  a point $x\in V(k)$ as follows. 
\begin{definition}\label{dfn4.1}
$\mathscr{H}(1,\theta_1,\dots,\theta_n):=H(1,\frac{p_1}{q_1},\dots, \frac{p_n}{q_n})$,
where $H$ is the standard height of points of the  space  $P^n(\mathbf{Q})$.  
\end{definition}
Since  $V(k)\subseteq K_0(\mathscr{A}_V)$, 
 each point  $x\in V(k)$ get assigned a height $\mathscr{H}(x)$. 
The corresponding counting function is given by the formula $N(V(k), T)=\# \{x\in V(k) ~|~ \mathscr{H}(x)\le T\}$.
Denote by  $rk ~K_0(\mathscr{A}_V)$ the rank of the abelian group $K_0(\mathscr{A}_V)$. 
Our main results can be formulated as follows. 
\begin{theorem}\label{thm4.3}
{\bf (\cite{Nik4})}
Let $V(k)$ be an $n$-dimensional projective variety over the number field $k$. Then
\begin{equation}\label{eq4.7}
\log_2 N(V(k), T)\sim\begin{cases}
T^n, & \hbox{if} \quad rk ~K_0(\mathscr{A}_V) < n+1\cr 
n ~\log_2 ~T, & \hbox{if} \quad  rk ~K_0(\mathscr{A}_V) = n+1\cr
Const, & \hbox{if} \quad rk ~K_0(\mathscr{A}_V) > n+1.
\end{cases}
\end{equation}
\end{theorem}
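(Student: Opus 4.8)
The plan is to reduce the counting of points of $V(k)$ of height $\mathscr{H}\le T$ to a lattice-point count in the pseudo-lattice $K_0(\mathscr{A}_V)\subset\mathbf{R}$ supplied by Theorem \ref{thm1.1}, and then to resolve that count according to the rank $r:=rk~K_0(\mathscr{A}_V)$. First I would apply Theorem \ref{thm1.1} to $V(k)$ to obtain the triple $(\Lambda,[I],K)$ together with the order-isomorphism $(K_0(\mathscr{A}_V),K_0^+(\mathscr{A}_V))\cong(\Lambda,[I],K)$; choosing a generator of the ideal class $[I]$ realizes $K_0(\mathscr{A}_V)$ as a dense subgroup of $\mathbf{R}$ of rank $r$, all of whose nonzero elements lie in the number field $K$, hence are quadratic-type irrationalities whose continued fraction expansions are eventually periodic. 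Next I would invoke the inclusion $V(k)\subseteq K_0(\mathscr{A}_V)$ of \cite[Lemma 3.1]{Nik4}, using the finiteness of $\Sha(V(k))$ from Theorem \ref{thm4.2} to bound the defect of this inclusion when it is proper, and compose it with the Minkowski question-mark function to send each $x\in V(k)$ to a tuple $(1,p_1/q_1,\dots,p_n/q_n)\in\mathbf{Q}^n/\mathbf{Z}^n$, so that $\mathscr{H}(x)$ becomes literally a projective height on $P^n(\mathbf{Q})$ in the sense of Definition \ref{dfn4.1}.

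The core step is to make the bound $\mathscr{H}(x)\le T$ explicit in terms of the continued fractions of the coordinates $\theta_i$. Writing $\theta_i=[a_0^{(i)};a_1^{(i)},a_2^{(i)},\dots]$ with eventually periodic tail, one has $\log_2 q_i\asymp\sum_{j\le \ell_i+p_i}a_j^{(i)}$, the sum of the partial quotients over the pre-period and a single period of $\theta_i$; after clearing denominators this makes $\log_2\mathscr{H}(x)$ comparable to the total arithmetic complexity of the tuple $(\theta_1,\dots,\theta_n)$ inside the pseudo-lattice. Counting the $x$ with $\mathscr{H}(x)\le T$ then amounts to counting the tuples $(1,\theta_1,\dots,\theta_n)\in K_0(\mathscr{A}_V)^{\,n+1}$ of complexity $\le\log_2 T$, and the trichotomy of (\ref{eq4.7}) should emerge at this point: when $r=n+1$ the coordinates $1,\theta_1,\dots,\theta_n$ form a $\mathbf{Q}$-basis of $K_0(\mathscr{A}_V)\otimes\mathbf{Q}$, admissible continued fractions grow geometrically in each of the $n$ independent directions, and one gets $\log_2 N(V(k),T)\sim n\log_2 T$; when $r>n+1$ the coordinates are over-determined by the scale $\Sigma(\mathscr{A}_V)$, only finitely many tuples survive, and $N(V(k),T)=Const$ eventually; when $r<n+1$ there are at least $n+1-r$ independent $\mathbf{Q}$-linear relations among the $\theta_i$, which frees the partial quotients to range over an $n$-dimensional cube of side $\asymp T$, giving $\log_2 N(V(k),T)\sim T^n$.

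The main obstacle, I expect, is the uniform asymptotic analysis of the question-mark function along the pseudo-lattice: controlling $\log_2 q_i$ in terms of the eventually periodic continued fraction of an arbitrary element of $K$, with uniformity in the order $\Lambda$ and the ideal class $[I]$, and showing that the resulting height $\mathscr{H}$ is independent both of the projective embedding $\phi$ in Definition \ref{dfn4.1} and of the chosen realization of $(K_0(\mathscr{A}_V),K_0^+(\mathscr{A}_V))$ as $(\Lambda,[I],K)$. A secondary difficulty is the borderline case $r=n+1$, where the error terms in the lattice-point count are of the same order $\log_2 T$ as the main term and must be absorbed; for this I would combine the metric theory of continued fractions with the Atiyah pairing between $K$-theory and $K$-homology already exploited in the proof of Theorem \ref{thm4.2}.
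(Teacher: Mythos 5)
The paper offers no proof of Theorem \ref{thm4.3} at all: it is imported verbatim from \cite{Nik4}, and Section 4.3 only records the construction of $\mathscr{H}$ via the inclusion $V(k)\subseteq K_0(\mathscr{A}_V)$, the Minkowski question-mark function and Definition \ref{dfn4.1}. Your setup reproduces exactly that construction, so up to the point where the counting begins you are aligned with the source. The difficulty is that everything after the setup in your proposal is asserted rather than derived, and the assertions contain a concrete error. You claim that the nonzero elements of $K_0(\mathscr{A}_V)\cong(\Lambda,[I],K)$ are ``quadratic-type irrationalities whose continued fraction expansions are eventually periodic.'' By Lagrange's theorem this holds only when $\deg(K|\mathbf{Q})=2$, i.e.\ only when $m=\dim_{\mathbf{C}}\mathscr{M}_V=1$ (the elliptic-curve case of Section 4.1). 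For a general $n$-dimensional variety one has $\deg(K|\mathbf{Q})=2m>2$, the coordinates $\theta_i$ are algebraic of higher degree, their simple continued fractions are \emph{not} eventually periodic, and your key estimate $\log_2 q_i\asymp\sum_{j\le\ell_i+p_i}a_j^{(i)}$ (sum over ``pre-period plus one period'') has no meaning. Any actual proof must replace the one-dimensional question-mark/continued-fraction apparatus by a higher-dimensional analogue tied to the rank-$r$ pseudo-lattice, and you have not supplied one.

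Even granting the elliptic case, the trichotomy of (\ref{eq4.7}) is not established by your argument: the statements that for $r=n+1$ ``admissible continued fractions grow geometrically in each of the $n$ independent directions,'' that for $r>n+1$ the coordinates are ``over-determined by the scale'' so only finitely many tuples survive, and that for $r<n+1$ the $\mathbf{Q}$-linear relations ``free the partial quotients to range over an $n$-dimensional cube of side $\asymp T$'' are heuristics, not proofs. In particular nothing in the proposal explains why the sub-critical case produces the doubly exponential count $N(V(k),T)\asymp 2^{T^n}$ rather than, say, a polynomial one, nor why the super-critical case forces boundedness; these are precisely the contents of the theorem. You also flag, correctly, that the well-definedness of $\mathscr{H}$ (independence of the embedding $\phi$ and of the realization $(\Lambda,[I],K)$) and the error analysis at $r=n+1$ are unresolved ``obstacles.'' As it stands the proposal is a plausible plan whose decisive steps are all deferred, and one of its foundational claims fails outside dimension one.
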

\begin{remark}
Given that  $rk ~K_0(\mathscr{A}_V)=2\dim_{\mathbf{C}}\mathscr{M}_V$, 
one can derive from (\ref{eq4.7}) a rule of thumb:  ``moduli $\mathscr{M}_V$ kill the rational points of $V(k)$'',  i.e.
the growing dimension of the space $\mathscr{M}_V$ decrease  the number $|V(k)|$.  
\end{remark}

Let $\beta_i$ be the $i$-th Betti number of the $n$-dimensional variety $V(k)$. 
Theorem \ref{thm4.3}  and the Chern character formula imply the following finiteness result. 
\begin{corollary}\label{cor4.2}
{\bf (\cite{Nik4})}
The set $V(k)$ is finite,  whenever $\sum_{i=1}^{n} \beta_{2i-1}> n+1$. 
\end{corollary}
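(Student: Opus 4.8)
The plan is to derive Corollary \ref{cor4.2} directly from Theorem \ref{thm4.3} by bounding the rank of $K_0(\mathscr{A}_V)$ from below in terms of the odd Betti numbers of $V(k)$, and then reading off the case analysis in \eqref{eq4.7}. The first step is to recall the Chern character isomorphism $ch\colon K_0(\mathscr{A}_V)\otimes\mathbf{Q}\xrightarrow{\ \sim\ }\bigoplus_{i} H^{2i}(V;\mathbf{Q})$, or rather its appropriate analog for the Serre $C^*$-algebra; since $\mathscr{A}_V\cong C(V)\rtimes_\alpha\mathbf{Z}$ by \cite[Lemma 5.3.2]{N}, the Pimsner--Voiculescu exact sequence computes $K_0(\mathscr{A}_V)$ from $K^0(V)$ and $K^1(V)$, and after tensoring with $\mathbf{Q}$ one gets a contribution of $\sum_i \beta_{2i-1}$ coming from the odd cohomology of $V$ sitting inside the $K^1$-term. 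The key inequality I would establish is therefore
\begin{equation}\label{eq:rankbound}
rk~K_0(\mathscr{A}_V)\ \ge\ \sum_{i=1}^{n}\beta_{2i-1},
\end{equation}
possibly with an additional $+1$ or $+2$ correction from the even part; the precise constant is what controls whether the threshold in the hypothesis should read $>n+1$ or $\ge n+1$.

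Granting \eqref{eq:rankbound}, the argument is then immediate: if $\sum_{i=1}^{n}\beta_{2i-1}>n+1$, then $rk~K_0(\mathscr{A}_V)\ge \sum_{i=1}^{n}\beta_{2i-1}>n+1$, so we are in the third case of \eqref{eq4.7}, namely $\log_2 N(V(k),T)\sim Const$. A bounded counting function means that for all $T$ the set $\{x\in V(k)~|~\mathscr{H}(x)\le T\}$ has cardinality at most some fixed bound; since $V(k)$ is the increasing union of these finite sets as $T\to\infty$ (every point of $V(k)$ has some finite height $\mathscr{H}(x)$, by the fundamental property of heights recalled before Definition \ref{dfn4.1}), the union itself is finite. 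Hence $V(k)$ is finite, which is the assertion of Corollary \ref{cor4.2}.

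The main obstacle, and the step deserving the most care, is pinning down the exact relationship between $rk~K_0(\mathscr{A}_V)$ and the Betti numbers of $V$ — i.e. justifying \eqref{eq:rankbound} with the correct additive constant. This requires checking that the odd-degree cohomology of $V$ genuinely injects into $K_0(\mathscr{A}_V)\otimes\mathbf{Q}$ under the crossed-product description and is not annihilated by the connecting map in the Pimsner--Voiculescu sequence; one has to use that the automorphism $\alpha$ acts trivially (or quasi-unipotently) on rational cohomology, which follows from $\alpha$ being a $\ast$-coherent automorphism fixing the variety's moduli point. A secondary point to be careful about is the implicit claim, already used in Theorem \ref{thm4.3}, that the inclusion $V(k)\subseteq K_0(\mathscr{A}_V)$ of \cite[Lemma 3.1]{Nik4} is compatible with the height $\mathscr{H}$ of Definition \ref{dfn4.1}, so that ``counting function bounded'' really does force $|V(k)|<\infty$ rather than merely bounding the image. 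Both points are, however, essentially bookkeeping on top of the machinery already assembled, so I expect the corollary to follow with no genuinely new ideas beyond the Chern character computation.
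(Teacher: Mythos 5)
Your proposal follows the paper's route exactly: the corollary is derived in one line from Theorem \ref{thm4.3} together with the Chern character formula, i.e.\ precisely the lower bound $rk~K_0(\mathscr{A}_V)\ge\sum_{i=1}^{n}\beta_{2i-1}$ (via Pimsner--Voiculescu for $C(V)\rtimes_\alpha\mathbf{Z}$) that places $V(k)$ in the third case of \eqref{eq4.7}, after which boundedness of the counting function gives finiteness. The one caveat is that your parenthetical ``or quasi-unipotently'' is not sufficient --- you need $\alpha_*$ to act as the identity on $K^1(V)\otimes\mathbf{Q}$ for the odd cohomology to inject with full rank, since a nontrivial unipotent $1-\alpha_*$ already has a proper kernel --- but you correctly isolate this as the step requiring verification, and the paper itself supplies no further detail.
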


  
   
  

\bibliographystyle{amsplain}


\end{document}